\theoremstyle{definition}
\newtheorem{theorem}{Theorem}[section]
\newtheorem{definition}[theorem]{Definition}
\newtheorem{lemma}[theorem]{Lemma}
\newtheorem{proposition}[theorem]{Proposition}
\newtheorem{corollary}[theorem]{Corollary}
\theoremstyle{remark}
\newtheorem{remark}[theorem]{Remark}
\numberwithin{equation}{section}
\begin{document}
	\title{A Weighted Llarull Type Theorem and its applications}
\author{Linfeng Zhou}
\address{Shanghai Key Laboratory of Pure Mathematics and Mathematical Practice, East
China Normal University, Shanghai}
\email{lfzhou@math.ecnu.edu.cn}
	\author{Guangrui Zhu}
    \address{School of Mathematical Sciences, East China Normal University, Shanghai, 200241, P. R. China}
\email{52275500052@stu.ecnu.edu.cn}
	\date{}
	\begin{abstract}
	This paper generalizes Llarull's classical scalar curvature rigidity theorem to the setting of weighted manifolds with P-scalar curvature. More precisely,  we prove the refinement of Llarull’s theorem for P-scalar curvature, which is similar to Listing's work \cite{listing2010scalar}. As an application, we establish a Llarull type theorem in the form of $\mathbb{S}^k\times\mathbb{T}^{n-k}$.
	\end{abstract}
    \subjclass[2020]{53C21, 53C24, 53C27}  
\keywords{Degree, Weighted Dirac Operator, Index Theorem, Weighted Scalar Curvature.}

	\maketitle
	\section{Introduction}
     The famous scalar curvature rigidity theorem of the round sphere was proved by Llarull in \cite{LlarullMR1600027}, which confirms the Gromov's conjecture in \cite{Gromov1}. For a comprehensive overview of extremality and rigidity properties of Riemannian manifolds, we refer to Gromov’s \textit{Four lectures on scalar curvature} \cite{Gromov2}. In fact, Llarull established the following theorem.
     \begin{theorem}(\cite[Theorem C]{LlarullMR1600027})\label{Llarull}
          Let $(M, g,)$ be a closed spin Riemannian manifold of dimension $n\geq3$ with $R(g)\geq n(n-1)$. If $h:(M,g) \to (\mathbb{S}^n, g_{\mathbb{S}^n})$ is a smooth, area non-increasing map of non-zero degree, then $h$ is an isometry.
     \end{theorem}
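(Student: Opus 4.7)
The plan is to follow the spinor/Dirac operator approach that Llarull introduced. First I would pull back the (locally defined, if $n$ is odd) spinor bundle of $\mathbb{S}^n$ along $h$ and form the twisted bundle $E=SM\otimes h^{*}S\mathbb{S}^{n}$ (using the appropriate $\mathbb{Z}_2$-grading when $n$ is even). On $E$ one has the twisted Dirac operator $D_E$, whose index can be computed by the Atiyah--Singer theorem: the top-degree contribution to the Chern character of $h^{*}S\mathbb{S}^{n}$ is proportional to $\deg(h)$, so under the hypothesis $\deg(h)\neq 0$ we get $\operatorname{ind}(D_E)\neq 0$, and hence a nontrivial spinor $\psi$ with $D_E\psi=0$.

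Next I would apply the Schrödinger--Lichnerowicz--Weitzenböck formula for $D_E^{2}$ and integrate, obtaining
\[
0=\int_{M}\!\Big(|\nabla\psi|^{2}+\tfrac{R(g)}{4}|\psi|^{2}+\langle\mathcal{R}^{E}\psi,\psi\rangle\Big)\,d\mathrm{vol}_{g},
\]
where $\mathcal{R}^{E}$ is the curvature endomorphism coming from the twisting factor $h^{*}S\mathbb{S}^{n}$. The heart of the proof is Llarull's pointwise estimate: at a point $p$ choose $g$-orthonormal tangent vectors $e_i$ that diagonalize $h^{*}g_{\mathbb{S}^{n}}$ with singular values $\lambda_i\ge 0$. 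Using that $\mathbb{S}^{n}$ has constant sectional curvature $1$ and the area non-increasing condition $\lambda_i\lambda_j\le 1$ for $i\neq j$, one shows
\[
\langle \mathcal{R}^{E}\psi,\psi\rangle \ \ge\ -\tfrac{1}{2}\Big(\sum_{i\neq j}\lambda_i\lambda_j\Big)|\psi|^{2}\ \ge\ -\tfrac{n(n-1)}{4}|\psi|^{2},
\]
with equality in the second inequality forcing $\lambda_i\lambda_j=1$ for all $i\neq j$, which (since $n\ge 3$) forces every $\lambda_i=1$ at $p$.

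Combining with the hypothesis $R(g)\ge n(n-1)$ makes the integrand non-negative, so each term must vanish identically. This gives $\nabla\psi\equiv 0$ (in particular $\psi$ is nowhere zero), $R(g)=n(n-1)$, and equality in the curvature estimate at every point, which means $h^{*}g_{\mathbb{S}^{n}}=g$, i.e.\ $h$ is a local isometry. A local isometry between closed Riemannian manifolds of the same dimension is a covering map, and because $\mathbb{S}^{n}$ is simply connected for $n\ge 3$, $h$ must be a global isometry.

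The main obstacle I expect is the pointwise algebraic curvature estimate on $\mathcal{R}^{E}$: one has to write $\mathcal{R}^{E}$ in a frame, expand it using Clifford multiplication on both the $SM$ and $h^{*}S\mathbb{S}^{n}$ factors, and bound it using the representation of $\operatorname{Spin}(n)$ on the spinor space together with the elementary inequality $\sum_{i<j}\lambda_i\lambda_j\le \binom{n}{2}$, while simultaneously tracking when the bound is saturated. Handling the even versus odd $n$ cases (choosing half-spinors versus using a $\mathbb{Z}/2$-graded sum, so that the index pairs nontrivially with $\deg(h)$) is the other technical point where care is needed.
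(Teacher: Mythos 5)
Your proposal reproduces the classical Dirac-operator argument of Llarull, which is precisely what the paper invokes (Theorem \ref{Llarull} is stated as a citation to Llarull's paper, not reproved), and the same Lichnerowicz-plus-index strategy is the backbone of the paper's proof of its weighted generalization, Theorem \ref{main-theorem-2}. So the approach matches.

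Two small points to tighten. First, the constant in the curvature estimate is inconsistent as written: with the coefficient $-\tfrac12$ the sum should run over unordered pairs $i<j$ (giving $-\tfrac12\binom{n}{2}=-\tfrac{n(n-1)}{4}$), whereas the notation $\sum_{i\neq j}$ suggests ordered pairs, which would require the coefficient $-\tfrac14$ as in Llarull's Lemma~4.5 and in this paper's proof of Theorem \ref{main-theorem-2}. Either convention yields $-\tfrac{n(n-1)}{4}\lvert\psi\rvert^2$, but you should fix one. Second, your treatment of odd $n$ is too vague to count as a proof: the spinor bundle of $\mathbb{S}^n$ is globally defined for all $n$ (every sphere is spin), so the issue is not ``local definition'' but that there is no natural chirality grading and hence no immediate index pairing. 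The standard remedy, used both by Llarull and in the odd-dimensional part of this paper's proof of Theorem \ref{main-theorem-2}, is to pass to the Riemannian product $M\times\mathbb{S}^1_r$, compose $h\times\tfrac{\mathrm{id}}{r}$ with a distance non-increasing degree-one map $\mathbb{S}^n\times\mathbb{S}^1\to\mathbb{S}^{n+1}$, run the even-dimensional argument there, and let $r\to\infty$ so the extra circle direction contributes an error of order $1/r$. You should make that reduction explicit rather than appealing to an unspecified ``$\mathbb{Z}/2$-graded sum.'' With those repairs the argument is complete: rigidity forces $\lambda_i\lambda_j=1$ for all $i\neq j$, hence (using $n\geq 3$) $\lambda_i\equiv 1$, so $h$ is a local isometry from a closed manifold to the simply connected $\mathbb{S}^n$ and therefore a global isometry.
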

     This theorem reveals the interrelationship among metric, scalar curvature, and topology. As an improvement of Llarull’s theorem, Listing generalized previous results by omitting the assumption of $\|\wedge^2 h_*\|\leq1$ and using the function $\|\wedge^2 h_*\|$ in the scalar curvature inequality as follows.\par
     \begin{theorem}(\cite[Theorem 1 ]{listing2010scalar})\label{Listing}
         Let $(M^n, g)$ be a closed spin Riemannian manifold of dimension $n\geq3$. If $h:(M,g) \to (\mathbb{S}^n, g_{\mathbb{S}^n})$ is a smooth map of non-zero degree such that $R(g)\geq n(n-1)\|\wedge^2 h_*\|$, then there exists a constant $c>0$ such that $h:(M,c\cdot g) \to (\mathbb{S}^n, g_{\mathbb{S}^n})$ is an isometry.
     \end{theorem}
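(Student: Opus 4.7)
The plan is to adapt Llarull's twisted Dirac argument, refining each pointwise estimate so that the constant $n(n-1)$ is replaced by $n(n-1)\|\wedge^2 h_*\|$. I split the argument into a topological step (producing a harmonic spinor), an analytic step (Weitzenb\"ock), and a rigidity/equality analysis.

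For the topological setup, assume first that $n$ is even, take $E = h^*\Sigma(\mathbb{S}^n)$ with the pulled-back Hermitian connection, and form the twisted Dirac operator $D^E \colon \Sigma M \otimes E \to \Sigma M \otimes E$. A standard Atiyah--Singer computation, of the type carried out by Llarull, gives $\operatorname{ind}(D^E_+) = \pm \deg(h) \neq 0$. For $n$ odd I would apply the $S^1$-doubling trick to reduce to the even case. Either way there is a nontrivial $\psi \in \ker D^E$. The Lichnerowicz formula then reads
\[(D^E)^2 = \nabla^*\nabla + \tfrac{R(g)}{4} + \mathcal{R}^E,\]
with $\mathcal{R}^E$ the Clifford contraction of the curvature of $E$. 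The key pointwise estimate to establish is: at each $p \in M$, with $\lambda_1 \geq \dots \geq \lambda_n \geq 0$ the singular values of $h_* \colon T_pM \to T_{h(p)}\mathbb{S}^n$,
\[\langle \mathcal{R}^E\sigma,\sigma\rangle_p \;\geq\; -\tfrac{1}{2}\sum_{i<j}\lambda_i\lambda_j\,|\sigma|^2 \;\geq\; -\tfrac{n(n-1)}{4}\|\wedge^2 h_*\|_p\,|\sigma|^2,\]
where the first inequality is Llarull's computation in a frame diagonalizing $h^*g_{\mathbb{S}^n}$, and the second uses $\|\wedge^2 h_*\|_p = \lambda_1\lambda_2 \geq \lambda_i\lambda_j$ for all $i<j$. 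Under the hypothesis $R(g) \geq n(n-1)\|\wedge^2 h_*\|$, the curvature term in the Weitzenb\"ock formula is pointwise nonnegative.

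Integrating $0 = \int_M |D^E\psi|^2$ then gives $0 = \int_M |\nabla\psi|^2 + \int_M (\tfrac{R(g)}{4} + \mathcal{R}^E)|\psi|^2$, so both integrands vanish identically: $\psi$ is parallel with constant nonzero norm, $R(g) = n(n-1)\|\wedge^2 h_*\|$ pointwise, and equality holds in the refined $\mathcal{R}^E$ estimate at every point. The pointwise equality $\tfrac{1}{2}\sum_{i<j}\lambda_i\lambda_j = \tfrac{n(n-1)}{2}\lambda_1\lambda_2$ easily forces $\lambda_1 = \dots = \lambda_n =: \lambda$, so $h^*g_{\mathbb{S}^n} = \lambda^2 g$. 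Using the parallel spinor together with $\deg h \ne 0$ to rule out the locus $\{\lambda = 0\}$, the map $h$ becomes a local conformal diffeomorphism onto $\mathbb{S}^n$. Comparing $R(\lambda^2 g) = n(n-1)$ with $R(g) = n(n-1)\lambda^2$ via the conformal transformation of scalar curvature gives, with $u = \log\lambda$, the identity $\Delta_g u + \tfrac{n-2}{2}|\nabla u|^2 = 0$; equivalently $v = e^{(n-2)u/2}$ is harmonic on the closed $M$ and hence constant. Setting $c = \lambda^2$, the map $h \colon (M, cg) \to (\mathbb{S}^n, g_{\mathbb{S}^n})$ is a local isometry of nonzero degree to the simply connected sphere, hence a global isometry.

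The hardest step will be the equality analysis of the refined pointwise Weitzenb\"ock bound: establishing not just the lower bound in terms of $\|\wedge^2 h_*\|$, but that equality forces all singular values of $h_*$ to coincide at every point. A secondary subtlety is ruling out the possibility that $\lambda$ vanishes on some nonempty set, where the conformal PDE argument degenerates; this should be handled via the parallelness of the spinor combined with the topological input $\deg h \ne 0$.
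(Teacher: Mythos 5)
Your overall strategy --- twisted Dirac operator, Atiyah--Singer index, Weitzenb\"ock with Llarull's refined curvature estimate, equality analysis, conformal rescaling --- is the standard route to this theorem and is essentially the same one the paper uses to prove the weighted Theorem~\ref{main-theorem-2} (the paper itself cites Listing and points to \cite[Theorem~3.1]{W-XMR4853305} for this statement). However, there is a genuine gap in your rigidity step at the degenerate locus $Z = \{\,p : \|\wedge^2 h_*\|(p) = 0\,\}$. Pointwise equality $\sum_{i<j}\lambda_i\lambda_j = \binom{n}{2}\lambda_1\lambda_2$ forces $\lambda_i\lambda_j = \lambda_1\lambda_2$ for every $i<j$, and this yields $\lambda_1=\dots=\lambda_n$ only where $\lambda_1\lambda_2>0$; if $\lambda_1\lambda_2=0$ one gets only $\lambda_2=\dots=\lambda_n=0$ with $\lambda_1$ unconstrained, so $h^*g_{\mathbb{S}^n}$ need not be a multiple of $g$ there. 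Consequently $h^*g_{\mathbb{S}^n}=\lambda^2g$, the substitution $u=\log\lambda$, and your conformal identity $\Delta_g u + \tfrac{n-2}{2}|\nabla u|^2=0$ are valid only on the open set $U=M\setminus Z$, not a priori on all of $M$, and your proposed fix --- parallelness of the spinor plus $\deg h\neq 0$ --- does not obviously close this: a parallel twisted spinor does not by itself forbid $h_*$ from degenerating on a proper subset.

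The missing ingredient is how to pass from the equation on $U$ to a global conclusion. One repair is to note that $v=\lambda^{(n-2)/2}$ is continuous and nonnegative on all of $M$, harmonic precisely on $\{v>0\}$, and then apply the strong maximum principle: since $\deg h\neq 0$ makes $U$ nonempty, $v$ attains a positive maximum, and the component of $U$ containing it is forced to be clopen, hence all of $M$. The paper's own repair (which is what carries over to the weighted case) is to work instead with the smooth function $u=\|\wedge^2 h_*\|=\lambda^2\ge 0$, multiply the conformal scalar-curvature identity through by $u^3$ so that every surviving term contains a factor of $u$ or $|\nabla u|^2$ (both vanishing on $Z$), obtain a polynomial identity valid on all of $M$, and then integrate by parts to kill $\nabla u$. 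Either repair completes your argument, but as written the proof stops one step short. A smaller point: for odd $n$ the $\mathbb{S}^1$-doubling needs to interact carefully with the hypothesis $R(g)\ge n(n-1)\|\wedge^2 h_*\|$ as the circle radius $r\to\infty$ in the equality analysis; the paper handles this with a Poincar\'e-type inequality from \cite{W-XMR4853305}, which you would need some analogue of rather than a bare ``reduce to the even case.''
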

     \noindent Here $\|\wedge^2 h_*\|$ denotes the operator norm of the second exterior power of the differential of $h$, see Definition \ref{Def-h}. For the proof of this theorem, we also refer to Theorem 3.1 in \cite[Theorem 3.1]{W-XMR4853305}.\par
     The proofs of Theorem \ref{Llarull} and Theorem \ref{Listing} all rely on the Dirac operator method, requiring the hypothesis that M is spin. An open question is whether the spin assumption can be removed in both Theorem \ref{Llarull} and Theorem \ref{Listing}. Recently, Cecchini-Wang-Xie-Zhu \cite{cecchini2024scalar} proved Theorem \ref{Llarull} without the spin assumption in 4-dimension provided $h$ is a distance non-increasing map, Theorem \ref{Listing} plays an important role in their proof.\par
     
    A weighted Riemannian manifold is a Riemannian manifold $(M,g)$ equipped with a smooth function $f:M\to \mathbb{R}$ and a measure $e^{-f}dV_g$ on $M$. It was first introduced by Lichnerowicz in \cite{L1MR268812,L2MR300228}. Weighted manifolds have appeared for a long time in differential geometry. They play an important role in Perelman’s approach to the Ricci flow. A crucial idea of Perelman’s spectacular proofs \cite{perelman2002entropy} required considering manifolds with density and their evolution. This led him to introduce the notion of weighted scalar curvature
    \[R_{f}(g)\coloneqq R(g)+2\Delta_{g} f-|\nabla_g f|^2_g\,,\]
    which is not the trace of the weighted Ricci curvature of Bakry-\'Emery, sometimes called the P-scalar curvature. Weighted manifolds with positive or non-negative Weighted scalar curvature have very similar properties compared to Riemannian manifolds with positive or non-negative scalar
curvature. By the Gauss-Bonnet theorem and the divergence theorem, every oriented
closed surface with positive Weighted scalar curvature must be a topological sphere. In recent years, there have been numerous studies on weighted scalar curvature.  Schoen-Yau [31] showed that if $(M^3, g)$ is oriented with positive scalar curvature, then it contains no closed, immersed stable minimal surfaces of positive genus, Fan \cite{FEMR2407091} generalized this result to the weighted case (see also\cite{G-K-RMR3176286,MR2672304} for subsequent work). Baldauf-Ozuch \cite{BaldaufMR4470247} proved the positive weighted mass
theorem under the spin assumption by generalizing Witten’s spinor argument in \cite{Witten} to the weighted case. Chu-Zhu \cite{Chu-ZhuMR4765850} proved the positive weighted mass theorem under the dimension $3\leq n\leq7$ assumption by generalizing Schoen-Yau's minimal surface method in \cite{S-Y1526976,S-Y2MR612249} to the weighted case. For more works of the weighted scalar curvature supporting the above philosophy, we refer the reader to \cite{yMR4747071,yMR4662656,yMR4833746,L-MR4713241}. \par
    To state our main theorem, we need to introduce some definitions.
    \begin{definition}\label{Def-h}
        Suppose $h:(M,g)\to (N,\bar{g})$ is a smooth map between Riemannian manifolds $(M,g)$ and $(N,\bar{g})$.
        \begin{itemize}
            \item[(i)] 
        The operator norm of the differential of $h$ is defined as
        $$\|h_*\|:M\to[0,+\infty),\,p\,\to\max_{0\neq v\in T_pM}\left[\frac{h^*\bar{g}(v,v)}{g(v,v)}\right]^{\frac{1}{2}}.$$
        \item[(ii)]The operator norm of the second exterior power of the differential of $h$ is defined as
        $$\|\wedge^2 h_*\|:M\to[0,+\infty),\,p\,\to\max_{0\neq \eta\in \wedge^2 T_pM}\left[\frac{h^*\bar{g}(\eta,\eta)}{g(\eta,\eta)}\right]^{\frac{1}{2}}.$$
        \end{itemize}
       Then $h$ is said to be \textit{distance non-increasing} if $\|h_*\|\leq1$ and $h$ is said to be \textit{area non-increasing} if $\|\wedge^2 h_*\|\leq1$.
    \end{definition}
    \begin{remark}
        The area scaling can be bounded by the square of the distance scaling:
        \[0\leq\|\wedge^2 h_*\|\leq\|h_*\|^2.\]
    \end{remark}
    Our main results generalize Theorem \ref{Llarull} and Theorem \ref{Listing} to the weighted case.

    \begin{theorem}\label{main-theorem-2}
        Let $(M^n, g, e^{-f}\mathrm{d}V_g)$ be a weighted closed spin Riemannian manifold of dimension $n\geq3$. If $h:(M,g) \to (\mathbb{S}^n, g_{\mathbb{S}^n})$ is a smooth map of non-zero degree such that $R_{f}\geq n(n-1)\|\wedge^2 h_*\|$, then there exists a constant $c>0$ such that $h:(M,c\cdot g) \to (\mathbb{S}^n, g_{\mathbb{S}^n})$ is an isometry. Consequently, $f$ is a constant.
    \end{theorem}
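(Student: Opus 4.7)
My plan is to adapt the twisted Dirac operator method of Llarull and Listing to the weighted setting by building a weighted version of the twisted Dirac operator on the pullback of the spinor bundle of $\mathbb S^n$ via $h$. Concretely, I will take $E = S(M) \otimes h^{*}S(\mathbb S^n)$ with the tensor-product connection, and follow Baldauf--Ozuch \cite{BaldaufMR4470247} in defining a weighted Dirac operator of the form $D^{E}_{f} = D^{E} - \tfrac12 \nabla f \cdot$, where Clifford multiplication is on the $S(M)$-factor. A direct computation should yield a weighted Lichnerowicz formula of the shape
\[
(D^{E}_{f})^{2} \;=\; (\nabla^{E}_{f})^{*}\nabla^{E}_{f} \;+\; \frac{R_{f}}{4} \;+\; \mathcal{R}^{E},
\]
where the adjoint is taken with respect to the weighted inner product on $L^{2}(E, e^{-f}\mathrm{d}V_{g})$, and $\mathcal{R}^{E}$ is the usual curvature endomorphism induced from $h^{*}S(\mathbb S^{n})$. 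The key analytic point is that conjugation by $e^{f/2}$ relates $D^{E}_{f}$ to the ordinary twisted Dirac $D^{E}$, so the two have the same Fredholm index.

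With the formula in hand, I will carry out the pointwise Llarull--Listing curvature estimate (as in \cite{W-XMR4853305}) to show
\[
\mathcal{R}^{E} \;\geq\; -\frac{n(n-1)}{4}\,\|\wedge^{2}h_{*}\|,
\]
so that the hypothesis $R_{f} \geq n(n-1)\|\wedge^{2}h_{*}\|$ forces $(D^{E}_{f})^{2} \geq (\nabla^{E}_{f})^{*}\nabla^{E}_{f}$ as a weighted quadratic form. Because $\deg(h) \neq 0$, the Atiyah--Singer index theorem for $D^{E}$ gives a nonzero index (up to a universal factor), hence a nontrivial section $\psi$ with $D^{E}_{f}\psi = 0$. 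Integrating against $e^{-f}\mathrm{d}V_{g}$ and using the Lichnerowicz identity forces $\nabla^{E}_{f}\psi \equiv 0$, the pointwise curvature estimate to be an equality wherever $\psi$ is nonzero, and $R_{f} = n(n-1)\|\wedge^{2}h_{*}\|$ there; unique continuation for $\nabla^{E}_{f}$-parallel sections then propagates these equalities to all of $M$.

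The final step is to extract geometric rigidity from the triple: $\nabla^{E}_{f}\psi = 0$, pointwise equality in the Listing curvature estimate, and $R_{f} = n(n-1)\|\wedge^{2}h_{*}\|$. The Listing-type equality in $\mathcal{R}^{E}$ forces the singular values of $h_{*}$ to be equal at each point, so $h$ is conformal with conformal factor $\lambda = \|h_{*}\|$. Existence of a $\nabla^{E}_{f}$-parallel spinor on the one hand, and the ordinary (non-weighted) parallel spinor that would arise on $(\mathbb S^{n}, \lambda^{-2}h^{*}g_{\mathbb S^{n}})$ on the other, should force $\nabla f$ to be the gradient field of the conformal factor; comparing this with the condition $R_{f} = n(n-1)\lambda^{2}$ and integrating against $e^{-f}\mathrm{d}V_{g}$ over the closed manifold $M$ will give $\mathrm{d}f \equiv 0$. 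Once $f$ is constant, $R_{f} = R(g)$ and the hypothesis reduces to that of Theorem \ref{Listing}, from which the existence of the homothety constant $c > 0$ making $h:(M,c\cdot g) \to (\mathbb S^{n}, g_{\mathbb S^{n}})$ an isometry follows.

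The main obstacle I anticipate is the last paragraph: in the unweighted case ($f \equiv 0$) the analysis of the parallel twisted spinor directly yields the Llarull rigidity, but in the weighted setting one must separate the algebraic contributions of $\nabla f$ from the geometric contributions of $h_{*}$ in the identity $\nabla^{E}_{f}\psi = 0$. Isolating the $|\nabla f|_{g}^{2}$ term, either via a clever pointwise pairing with $\psi$ or via integration by parts against $e^{-f}\mathrm{d}V_{g}$ on the closed manifold, and concluding $\nabla f \equiv 0$ before invoking Listing's theorem, is the genuinely new ingredient beyond the classical Dirac-operator proof.
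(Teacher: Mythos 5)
The first half of your proposal — weighted twisted Dirac operator, weighted Lichnerowicz formula, index invariance under the deformation from $D^E$ to $D^E_f$, the Llarull/Listing curvature estimate, and the deduction that $R_f = n(n-1)\|\wedge^2 h_*\|$ with all singular values of $h_*$ equal — is in line with the paper's Section 3, although the paper must handle the odd-dimensional case separately via the product $M\times\mathbb{S}^1_r$, the map $\tilde h = \gamma\circ(h\times\tfrac{\mathrm{id}}{r})$, and the Poincar\'e-type inequality from \cite{W-XMR4853305}; your sketch treats only the index-obstruction pattern, which works cleanly for the even case but silently assumes more in the odd case.

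The genuine gap is in your final paragraph, and you acknowledge it yourself: you never actually produce the argument that $f$ (and $u:=\|\wedge^2 h_*\|$) are constant. Your proposed route — first show $\nabla f\equiv 0$ via a parallel-spinor comparison, then invoke Listing's theorem — cannot work in that order, because after the Dirac step all one has is the coupled system $R_f=n(n-1)u$ and $h^*g_{\mathbb{S}^n}=u\cdot g$; there is no hope of isolating $f$ first, since $R_f = R(g) + 2\Delta f - |\nabla f|^2$ involves $u$ through $R(g)$ via the conformal change formula. Moreover, $\mathbb{S}^n$ has no parallel spinor, so ``the ordinary parallel spinor that would arise on $(\mathbb{S}^n,\lambda^{-2}h^*g_{\mathbb{S}^n})$'' does not exist, and the claim that comparing parallel spinors ``should force $\nabla f$ to be the gradient field of the conformal factor'' is an assertion, not a computation. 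What the paper actually does at this juncture is substitute the conformal change of scalar curvature into $R_f=n(n-1)u$ to get a PDE in $u$ and $f$ (their Eq.~(3.5)--(3.6)), multiply it by $u^k e^{lf}$, integrate by parts, and pick the exponents $k=\tfrac{n-2}{4}$ and $l=\tfrac{-n^2+(11+4\sqrt2)n-10-4\sqrt2}{2(n-1)^2}$ precisely so that the resulting quadratic form in $(\nabla f,\nabla u)$ is a perfect square; this yields $Au\nabla f + B\nabla u\equiv 0$, after which a second integration by parts kills $\nabla u$ and then $\nabla f$. None of this is suggested by your proposal, and naively integrating the PDE against $e^{-f}dV_g$ (as you suggest) gives terms of mixed sign and no conclusion. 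This perfect-square/integration-by-parts device, not anything about parallel spinors, is the ``genuinely new ingredient'' you correctly sense is needed.
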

   \begin{remark}
   The proof of Theorem \ref{main-theorem-2} follows Llarull's approach and using the Poincar\'e type inequality in \cite{W-XMR4853305}. The key step of our proof is to solve the weighted function $f$ and the operator norm $\|\wedge^2 h_*\|$ by constructing a perfect square expression and using integration by parts in a differential equation. 
    \end{remark}
    \begin{corollary}(\cite[Theorem 1.3]{DengMR4210894})\label{main-theorem-1}
    Let $(M, g, e^{-f}\mathrm{d}V_g)$ be a weighted closed spin Riemannian manifold of dimension $n\geq3$ with $R_{f}\geq n(n-1)$. If $h:(M,g) \to (\mathbb{S}^n, g_{\mathbb{S}^n})$ is a smooth, area non-increasing map of non-zero degree, then $h$ is an isometry. Consequently, $f$ is a constant.
    \end{corollary}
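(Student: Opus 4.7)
The plan is to deduce this corollary directly from Theorem \ref{main-theorem-2}. First I would verify that its hypotheses are met: the area non-increasing condition means $\|\wedge^2 h_*\|\le 1$ pointwise on $M$, so the stronger bound $R_f\ge n(n-1)$ immediately upgrades to $R_f\ge n(n-1)\|\wedge^2 h_*\|$. Applying Theorem \ref{main-theorem-2} then produces a constant $c>0$ such that $h:(M,cg)\to(\mathbb{S}^n,g_{\mathbb{S}^n})$ is an isometry, and simultaneously gives that $f$ is constant on $M$.

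The remaining task, and essentially the only place where actual work is required, is to pin down $c=1$. I would do this by a two-sided squeeze. From the isometry condition $h^*g_{\mathbb{S}^n}=cg$ together with Definition \ref{Def-h}, one reads off that $\|\wedge^2 h_*\|\equiv c$ on $M$ with respect to $g$; the area non-increasing assumption then forces $c\le 1$. In the opposite direction, since $f$ is constant the weighted scalar curvature coincides with the ordinary scalar curvature, and the standard scaling of $R$ under the constant conformal change $\tilde g=cg$, combined with the fact that $(M,cg)$ has scalar curvature $n(n-1)$, yields $R(g)=c\cdot n(n-1)$. The hypothesis $R_f\ge n(n-1)$ then forces $c\ge 1$. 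Combining these two inequalities gives $c=1$, so $h:(M,g)\to(\mathbb{S}^n,g_{\mathbb{S}^n})$ is itself an isometry, and $f$ is constant as already noted.

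Because Theorem \ref{main-theorem-2} is already in hand, no serious obstacle stands in the way. Conceptually, the role of the stronger curvature assumption $R_f\ge n(n-1)$ (independent of $\|\wedge^2 h_*\|$) is exactly to eliminate the residual scaling ambiguity left open by any Listing-type conclusion, so the corollary reduces to the squeeze argument sketched above.
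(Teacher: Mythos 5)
Your proposal is correct and is essentially the deduction the paper has in mind (the paper leaves the corollary's proof implicit, crediting Deng for the original result). Applying Theorem~\ref{main-theorem-2} and then pinning down $c=1$ by the two-sided squeeze is exactly the right move; note that one can shortcut the conformal rescaling step slightly by quoting the identity $R_f = n(n-1)\|\wedge^2 h_*\| = n(n-1)c$ already established inside the proof of Theorem~\ref{main-theorem-2}, which with $R_f\ge n(n-1)$ gives $c\ge1$ directly, while area non-increasing gives $c\le1$ — but your version via $R(cg)=c^{-1}R(g)$ is equally valid.
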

    
    \begin{remark}
       Deng has already proved Corollary \ref{main-theorem-1} for general weighted scalar curvature in \cite{DengMR4210894}.  
    \end{remark}
    As a corollary of Theorem \ref{main-theorem-2}, we obtain the 
 Llarull type theorem also holds for \textit{warped $\mathbb{T}^k$-extension} of $M$. see also results of B{\"a}r-Brendle-Hanke-Wang in \cite{BrendleMR4733718} and Wang-Xie in \cite{W-XMR4853305} for the case with a non-empty boundary.
 \begin{theorem}\label{T-extension}
     Let $(M^n,g)$ be a spin Riemannian manifold of dimension $n\geq3$, the \textit{warped $\mathbb{T}^k$-extension} of $M$ is the manifold  $(M^n\times\mathbb{T}^k,\bar{g})$ with $\bar{g}=g+\sum_{i=1}^ke^{-2f_i}d\theta^2$, where $f_i$ is a smooth function on $M$ for each $1\leq i\leq k$. Assume that $h:(M,g) \to (\mathbb{S}^n, g_{\mathbb{S}^n})$ is a smooth map of non-zero degree such that $R(\bar{g})\geq n(n-1)\|\wedge^2 h_*\|$, then there exists a constant $c>0$ such that $h:(M,c\cdot g) \to (\mathbb{S}^n, g_{\mathbb{S}^n})$ is an isometry. Consequently, $f_i$ is a constant for each $1\leq i\leq k$.
 \end{theorem}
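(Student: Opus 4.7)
The plan is to reduce Theorem \ref{T-extension} to Theorem \ref{main-theorem-2} by recognizing $\bar g$ as a geometric realization of a weight on $M$. The key is an identity relating the scalar curvature of the warped product to a weighted scalar curvature on the base.

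First, I would compute the scalar curvature $R(\bar{g})$ of the multiply warped product metric $\bar g=g+\sum_{i=1}^k e^{-2f_i}d\theta_i^2$. Each fiber factor $(\mathbb{S}^1, d\theta^2)$ is flat with $m_i=1$, so the standard formula for multiply warped products specializes to
$$R(\bar g)=R(g)-2\sum_{i=1}^k e^{f_i}\Delta_g(e^{-f_i})-2\sum_{i<j}\bigl\langle\nabla f_i,\nabla f_j\bigr\rangle_g.$$
Expanding $e^{f_i}\Delta_g(e^{-f_i})=|\nabla f_i|^2-\Delta f_i$ and setting $f:=\sum_{i=1}^k f_i$, so that $|\nabla f|^2=\sum_i|\nabla f_i|^2+2\sum_{i<j}\langle\nabla f_i,\nabla f_j\rangle$, a short algebraic rearrangement yields the crucial identity
$$R(\bar g)=R_f(g)-\sum_{i=1}^k|\nabla f_i|^2.$$

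Since $\sum_i|\nabla f_i|^2\geq 0$, the hypothesis $R(\bar g)\geq n(n-1)\|\wedge^2 h_*\|$ immediately gives
$$R_f(g)=R(\bar g)+\sum_{i=1}^k|\nabla f_i|^2\geq n(n-1)\|\wedge^2 h_*\|.$$
Thus the weighted closed spin manifold $(M,g,e^{-f}dV_g)$ satisfies the hypotheses of Theorem \ref{main-theorem-2}. Applying it, there exists $c>0$ such that $h:(M,c\cdot g)\to(\mathbb{S}^n,g_{\mathbb{S}^n})$ is an isometry and $f=\sum_i f_i$ is constant on $M$.

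To upgrade the conclusion from \emph{$f$ is constant} to \emph{each $f_i$ is constant}, I would exploit the equality case. Since $h$ is an isometry after rescaling by $c$, we have $R(g)=c\,n(n-1)$ and $\|\wedge^2 h_*\|_g\equiv c$ pointwise, so with $f$ constant, $R_f(g)=n(n-1)\|\wedge^2 h_*\|$ identically. Feeding this back into the key identity and using $R(\bar g)\geq n(n-1)\|\wedge^2 h_*\|$ forces $\sum_{i=1}^k|\nabla f_i|^2\equiv 0$, so every $f_i$ is constant.

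The main obstacle is the scalar curvature computation in the first step: the cross terms $\langle\nabla f_i,\nabla f_j\rangle_g$ produced by the multiply warped structure must combine precisely so that $|\nabla f|^2$ appears with the right sign to reassemble $R_f(g)=R(g)+2\Delta f-|\nabla f|^2$ and leave over the manifestly nonnegative remainder $\sum_i|\nabla f_i|^2$. Once this bookkeeping is done, the rest of the argument is a direct invocation of Theorem \ref{main-theorem-2} together with its equality discussion.
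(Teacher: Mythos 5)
Your proof is correct and takes essentially the same route as the paper: both establish the identity $R(\bar g)=R_f(g)-\sum_i|\nabla f_i|^2$ with $f=\sum_i f_i$, apply Theorem \ref{main-theorem-2} to get the isometry and constancy of $f$, and then feed the equality back into the identity to force $\nabla f_i\equiv 0$. Your write-up is merely more explicit in the multiply warped product scalar curvature bookkeeping and in the final equality argument.
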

 \begin{remark}
    For $n\geq2$, Theorem \ref{main-theorem-2} and Theorem \ref{T-extension} also hold if one replaces the operator norm $\|\wedge^2 h_*\|$ by $\|h_*\|^2$. 
\end{remark}
  As an application of Theorem \ref{T-extension}, we generalize Theorem 1.1 in \cite{HSSMR4855340} to $\mathbb{S}^k$ for $4\leq k\leq6$ by following the approach of Hao-Shi-Sun in \cite{HSSMR4855340}.
  \begin{theorem}\label{HSS}
    Suppose $4\leq n\leq7$ and $3\leq k\leq n-1$. Let $(M^n, g)$ be a closed spin Riemannian manifold. If $h:(M,g) \to (\mathbb{S}^k\times\mathbb{T}^{n-k}, g_{\mathbb{S}^k}+g_{\mathbb{T}^{n-k}})$ is a smooth map of non-zero degree such that $R(g)\geq k(k-1)\|\wedge^2 h_*\|$, then there exists a constant $c>0$ such that $(M,g)$ is locally isometric to $(\mathbb{S}^k\times\mathbb{T}^{n-k}, \frac{1}{c}\cdot(g_{\mathbb{S}^k}+g_{\mathbb{T}^{n-k}}))$.
      
  \end{theorem}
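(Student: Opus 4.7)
The plan is to perform $n-k$ iterated weighted-minimal slicings of $M$ using the torus factors of the target, producing a closed spin submanifold $\Sigma^k\subset M$ together with an associated warped $\mathbb{T}^{n-k}$-extension to which Theorem \ref{T-extension} applies; the rigidity on $\Sigma$ then propagates back to $M$.

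Write $h=(h_1,h_2)$ with $h_1:M\to\mathbb{S}^k$ and $h_2:M\to\mathbb{T}^{n-k}$. Since $\deg(h)\ne0$, the K\"unneth formula forces $h_2^{*}(d\theta_1\wedge\cdots\wedge d\theta_{n-k})\ne0$ in $H^{n-k}(M;\mathbb{R})$, so each $h_2^{*}d\theta_i$ is a nontrivial integral class in $H^1(M;\mathbb{Z})$. In dimension $n\le7$ the Poincar\'e dual of $h_2^{*}d\theta_1$ is realized by a smooth stable minimal hypersurface $\Sigma_1\subset M$; let $u_1>0$ be its first Jacobi eigenfunction with eigenvalue $\lambda_1\ge0$, set $f_1:=-\log u_1$, and equip $\Sigma_1\times S^1$ with the warped metric $\bar g_1=g|_{\Sigma_1}+e^{-2f_1}d\theta_1^{2}$. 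The Gauss equation combined with $H=0$ gives
\begin{equation*}
    R(\bar g_1) \;=\; R(g)|_{\Sigma_1}+3|A_1|^{2}+2\lambda_1 \;\ge\; R(g)|_{\Sigma_1}.
\end{equation*}
Since $\bar g_1$ is $S^1$-translation-invariant, the remaining classes admit $S^1$-invariant representatives, so the Poincar\'e dual of $h_2^{*}d\theta_2|_{\Sigma_1}$ is realized by an $S^1$-invariant weighted-minimal hypersurface $\Sigma_2\times S^1\subset\Sigma_1\times S^1$ whose base $\Sigma_2\subset\Sigma_1$ is a smooth stable $e^{-f_1}$-weighted minimizer. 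Iterating $n-k$ times produces nested hypersurfaces $\Sigma=\Sigma_{n-k}\subset\cdots\subset\Sigma_1\subset M$ with $\dim\Sigma=k$, weighted-stability first eigenfunctions $u_1,\ldots,u_{n-k}>0$ on $\Sigma$, and (setting $f_i:=-\log u_i$) the multi-warped metric
\begin{equation*}
    \bar g \;=\; g|_\Sigma+\sum_{i=1}^{n-k}e^{-2f_i}d\theta_i^{2} \qquad\text{on } \Sigma\times\mathbb{T}^{n-k},
\end{equation*}
for which induction along the scalar-curvature comparison above yields
\begin{equation*}
   R(\bar g) \;\ge\; R(g)|_\Sigma \;\ge\; k(k-1)\|\wedge^{2}h_{*}\| \;\ge\; k(k-1)\|\wedge^{2}(h_1|_\Sigma)_{*}\|,
\end{equation*}
the last inequality because the projection $\mathbb{S}^k\times\mathbb{T}^{n-k}\to\mathbb{S}^k$ is area non-increasing.

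Theorem \ref{T-extension} then applies to $(\Sigma,g|_\Sigma,\bar g)$ with the map $h_1|_\Sigma:\Sigma\to\mathbb{S}^k$ of nonzero degree, giving a constant $c>0$ such that $h_1|_\Sigma:(\Sigma,c\cdot g|_\Sigma)\to\mathbb{S}^k$ is an isometry and each $f_i$ is constant. Chasing back the equality cases forces $|A_i|\equiv0$ and $\lambda_i=0$ at every slicing step, so each $\Sigma_i$ is totally geodesic with constant first stability eigenfunction. The freedom in choosing Poincar\'e-dual representatives (or equivalently, a local transverse perturbation) then foliates an open neighborhood of $\Sigma$ in $M$ by totally geodesic round $\mathbb{S}^k$-slices over flat $\mathbb{T}^{n-k}$-fibres, yielding the desired local isometry $(M,g)\cong(\mathbb{S}^k\times\mathbb{T}^{n-k},\tfrac{1}{c}(g_{\mathbb{S}^k}+g_{\mathbb{T}^{n-k}}))$.

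The main obstacle is controlling the iteration inside the weighted (warped) category: one must verify that weighted-stable minimizers of codimension one remain smooth in dimension $\le7$, that the first weighted-stability eigenfunction can be chosen invariant under the acquired $\mathbb{T}^{i-1}$-symmetry so that it descends to a function on $\Sigma_i$, and that the Gauss-equation-plus-eigenfunction computation propagates through each step to give the comparison $R(\bar g_i)\ge R(\bar g_{i-1})|_{\Sigma_i\times\mathbb{T}^{i-1}}$ with only the favorable $|A_i|^{2}$ and $\lambda_i$ corrections (and, crucially, without spurious cross-terms $\langle\nabla f_i,\nabla f_j\rangle$). These tools are by-now standard from the iterated-slicing arguments of Schoen-Yau and Chodosh-Li, but organizing them so that the final warping functions assemble into a bona-fide multi-warped $\mathbb{T}^{n-k}$-extension compatible with the hypothesis of Theorem \ref{T-extension} is where the bulk of the technical work lies.
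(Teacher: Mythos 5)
Your iterated-slicing plan is the ``torical symmetrization'' that the paper itself invokes only as a citation, so the spirit is consistent; but the paper actually proves only the codimension-one case $\mathbb{S}^{n-1}\times\mathbb{S}^1$ in detail (single minimal hypersurface $\Sigma^{n-1}$, warped circle, Theorem \ref{T-extension}, then a CMC-foliation argument), and reduces general $k$ to this case. Your proposal instead slices directly down to $\Sigma^k$, which is a defensible reorganization, but it contains concrete gaps that matter.

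First, a computational error: the Gauss equation for a minimal slice together with the Jacobi equation gives $R(\bar g_1)=R(g)|_{\Sigma_1}+|A_1|^2+2\lambda_1$, with coefficient $1$ on $|A_1|^2$ (this is exactly the identity the paper uses). Your $3|A_1|^2$ is wrong; the inequality survives, but the slip propagates uncertainty into the equality analysis you later ``chase back.'' Second, you flag but do not resolve the assertion that the iterated slicing yields $R(\bar g)\ge R(g)|_\Sigma$ for the full multi-warped metric. The cross terms $\langle\nabla f_i,\nabla f_j\rangle$ do appear — they live in $|\nabla f|^2=|\sum_i\nabla f_i|^2$ in the warped-product scalar curvature formula \eqref{eqHSS} — and they are not ``spurious''; the reason Theorem \ref{T-extension} can absorb them is precisely that it is phrased for the aggregated weight $f=\sum f_i$. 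Verifying that each step's weighted Jacobi eigenfunction, once pulled through the restriction, produces warping functions compatible with that aggregate, and that $R(\bar g_j)\ge R(\bar g_{j-1})|_{\Sigma_j\times\mathbb{T}^{j-1}}$ holds at every step, is the substance of the symmetrization; you identify it as the crux and then simply assert it.

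The most serious gap is the last step. Rigidity on $\Sigma^k$ plus constancy of the $f_i$ does not, by itself, yield a local isometry from $(M,g)$ to $\mathbb{S}^k\times\mathbb{T}^{n-k}$. In the $k=n-1$ case the paper spends the bulk of the proof constructing a local CMC foliation $\{\Sigma_t\}$ around $\Sigma$, ruling out $H_t\ne0$ via a brane functional / $\mu$-bubble comparison, showing the lapse function is constant because $\Sigma_t$ is totally geodesic with $\operatorname{Ric}(\nu,\nu)=0$, and then running a continuation argument to extend the resulting local isometry $\Sigma\times(-\epsilon,\epsilon)\to M$ to $\Sigma\times\mathbb{R}\to M$. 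For general $k$ this propagation has to be iterated or combined with the symmetrization at every stage. Your phrase ``the freedom in choosing Poincar\'e-dual representatives \ldots then foliates an open neighborhood'' does not supply any of this; it is exactly the content that needs to be proved, not invoked.
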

   Here, we would like to make a statement: The Theorem \ref{HSS} in the case of $h$ is area non-increasing was also obtained independently by Chow in a recent preprint \cite{chow2025scalar}.\par
    The paper is organized as follows. In section \ref{section-pre}, we give some facts about the spin manifolds and weighted Dirac operator. In section \ref{section-proof}, we prove Theorem \ref{main-theorem-2}. In section \ref{sec-HSS}, we prove Theorem \ref{T-extension} and Theorem \ref{HSS}.
    
    {\em Acknowledgements}: The second author would like to thank Professor Yukai Sun for much help and many discussions. 
    \section{Spin manifold and weighted Dirac operator}\label{section-pre}
    In this section, we recall some key facts concerning spin manifolds and weighted Dirac operator.\par 
    A spin manifold $M^n$ is an oriented manifold with a spin structure on its tangent bundle $TM$. Note that a manifold $M$ is spin if and only if the first and second Stiefel-Whitney class of tangent bundle vanish, namely $w_1(TM)=w_2(TM)=0$. There is a complex spinor bundle $S\coloneqq S(TM)$ with its Hermitian metric $\langle,\rangle_S$ and compatible connection $\nabla^S$ over a spin Riemannian manifold $(M,g)$. The Dirac operator is a first order elliptic differential operator acting on spinors and is given by
\[D\coloneqq \sum_{i=1}^{n} e_{i}\cdot\nabla^S_{e_i}\,,\]
   where $\{e_i\}_{i=1}^n$ is a local orthonormal frame of $TM$ and $\cdot$ denotes the Clifford multiplication. For a twisted spinor bundle $S\otimes E$, the twisted Dirac operator $D_E: \Gamma(S\otimes E)\to\Gamma(S\otimes E)$ is defined as
   \[D_E\coloneqq \sum_{i=1}^{n} e_{i}\cdot\nabla^{S\otimes E}_{e_i}\,,\]
   where $E$ is a given vector bundle over $M$.\par
    Let $(M,g)$ be a spin Riemannian manifold and $f \in C^{\infty}(M)$. The weighted Dirac operator $D_{f,E} : \Gamma(S\otimes E)\to\Gamma(S\otimes E)$ is defined as
\[ D_{f,E}\coloneqq D_E-\frac{1}{2}\nabla f \cdot ,\]
where $D_E$ is the standard twisted Dirac operator defined above, $\cdot$ denotes the Clifford multiplication. For instance, let $\varphi\otimes v\in\Gamma(S\otimes E)$, then
$$D_{f,E}(\varphi\otimes v)=D_E(\varphi\otimes v)-\frac{1}{2}(\nabla f\cdot\varphi)\otimes v\,.$$
\par
As standard Dirac operator, the weighted Dirac operator is also self-adjoint with respect to the weighted $L^2$-inner product. By slight abuse of notation, we denote the connection on $S\otimes E$ by $\nabla$.
\begin{proposition}\label{Dirac}
    The weighted Dirac operator $D_{f,E}$ is self-adjoint, i.e.,
    $$\int_{M}\langle D_{f,E} \Psi,\Phi\rangle e^{-f}=\int_{M}\langle  \Psi,D_{f,E}\Phi\rangle e^{-f},$$
    where $\Psi,\Phi\in\Gamma(S\otimes E)$ and $\langle,\rangle$ is the induce metic on $S\otimes E$ .
\end{proposition}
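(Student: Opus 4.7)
The plan is to reduce the weighted self-adjointness statement to the standard self-adjointness of the twisted Dirac operator $D_E$, corrected by the boundary term that comes from integration against the weight $e^{-f}$. The defining formula $D_{f,E} = D_E - \tfrac{1}{2}\nabla f \cdot$ is arranged precisely so that this correction is absorbed, and this is what I want to make transparent.

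First I would record the two pointwise facts that do all the work: (a) Clifford multiplication by a tangent vector is skew-Hermitian, i.e.\ $\langle X\cdot\Psi,\Phi\rangle=-\langle\Psi,X\cdot\Phi\rangle$ for any $X\in TM$ and $\Psi,\Phi\in\Gamma(S\otimes E)$; and (b) the standard identity
\begin{equation*}
\langle D_E\Psi,\Phi\rangle-\langle\Psi,D_E\Phi\rangle=-\mathrm{div}_g(W),
\end{equation*}
where $W$ is the vector field defined in a local orthonormal frame $\{e_i\}$ by $g(W,e_i)=\langle\Psi,e_i\cdot\Phi\rangle$. Identity (b) is the usual computation: pick normal coordinates at a point, write $D_E\Psi=\sum e_i\cdot\nabla_{e_i}\Psi$, move $e_i$ across the inner product using (a), and use the metric-compatibility of $\nabla^{S\otimes E}$; the Christoffel terms vanish at the base point, yielding the divergence expression.

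Next I would plug in the weight. Since $M$ is closed,
\begin{equation*}
\int_M \mathrm{div}_g\!\bigl(e^{-f}W\bigr)\,dV_g=0,
\end{equation*}
which rearranges to $\int_M \mathrm{div}_g(W)\,e^{-f}\,dV_g=\int_M g(W,\nabla f)\,e^{-f}\,dV_g$. Unpacking the right-hand side via the definition of $W$ gives $\int_M \langle\Psi,\nabla f\cdot\Phi\rangle\,e^{-f}\,dV_g$. Combining with (b) yields
\begin{equation*}
\int_M \langle D_E\Psi,\Phi\rangle\,e^{-f}\,dV_g=\int_M \langle\Psi,D_E\Phi\rangle\,e^{-f}\,dV_g-\int_M \langle\Psi,\nabla f\cdot\Phi\rangle\,e^{-f}\,dV_g.
\end{equation*}

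Finally, I would substitute the definition $D_{f,E}=D_E-\tfrac12\nabla f\cdot$ into both sides of the desired identity and use (a) once more to turn $\langle\nabla f\cdot\Psi,\Phi\rangle$ into $-\langle\Psi,\nabla f\cdot\Phi\rangle$. The half-factor is exactly what is needed so that the correction produced by the weight in the previous step cancels against the sum of the two $\tfrac12$-terms coming from $D_{f,E}$, leaving $\int_M\langle\Psi,D_{f,E}\Phi\rangle\,e^{-f}\,dV_g$. There is no genuine obstacle here; the only thing to be careful about is sign bookkeeping between the skew-adjointness of Clifford multiplication and the sign produced by the weighted integration by parts, since mismatching them would either double or annihilate the $\nabla f$ contribution instead of canceling it.
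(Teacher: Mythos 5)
Your proof is correct and relies on the same two ingredients as the paper's: the skew-Hermitianness of Clifford multiplication and integration by parts of a divergence term on the closed manifold. The only difference is organizational — you first derive the divergence identity for the unweighted operator $D_E$ and then fold in the weight and the $-\tfrac12\nabla f\cdot$ correction, whereas the paper computes directly with $D_{f,E}$ and recognizes the residual as a single weighted divergence $\mathrm{div}_f X$ — but this is a reshuffling of the same steps, not a different argument.
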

\begin{proof}
We fix an arbitrary point $p \in M$ and work with a local orthonormal
frame $\{e_i\}$ such that $\nabla e_i = 0$ at $p$. Then, at $p$, for any $\Psi,\Phi\in\Gamma(S\otimes E),$ 
\begin{align*}
    \langle D_{f,E}\Psi,\Phi\rangle&=\langle D_E\Psi-\frac{1}{2}\nabla f\cdot\Psi,\Phi\rangle\\
    &=\langle e_{i}\cdot\nabla_{e_i}\Psi,\Phi\rangle-\frac{1}{2}\langle\nabla f\cdot\Psi,\Phi\rangle\\
    &=-\langle\nabla_{e_i}\Psi,e_{i}\cdot\Phi\rangle+\frac{1}{2}\langle\Psi,\nabla f\cdot\Phi\rangle\\
    &=-e_i\langle\Psi,e_{i}\cdot\Phi\rangle+\langle\Psi,e_{i}\cdot\nabla_{e_i}\Phi\rangle+\frac{1}{2}\langle\Psi,\nabla f\cdot\Phi\rangle\\
    &=\langle\Psi,e_{i}\cdot\nabla_{e_i}\Phi\rangle-\frac{1}{2}\langle\Psi,\nabla f\cdot\Phi\rangle-e_i\langle\Psi,e_{i}\cdot\Phi\rangle+\langle\Psi,\nabla f\cdot\Phi\rangle\\
    &=\langle  \Psi,D_{f,E}\Phi\rangle-e_i\langle\Psi,e_{i}\cdot\Phi\rangle+\langle\Psi,\nabla f\cdot\Phi\rangle.\\
\end{align*}
The last two terms are a weighted divergence term. In fact, define a vector field
$X\in\Gamma(TM\otimes\mathbb{C})$ on $M$ by
\[g(X,Y)\coloneqq-\langle\Psi,Y\cdot\Phi\rangle\]
for all vector field $Y\in\Gamma(TM)$. Then (again computing at a fixed arbitrary point p)
\begin{align*}
\mathrm{div}_f X&\coloneqq\mathrm{div}X-g(X,\nabla f)\\
&=g(\nabla_{e_i}X,e_i)-g(X,\nabla f)\\
&=e_{i}g(X,e_i)-g(X,\nabla f)\\
&=-e_i\langle\Psi,e_{i}\cdot\Phi\rangle+\langle\Psi,\nabla f\cdot\Phi\rangle.
\end{align*}
The proposition follows from integration by parts.
\end{proof}
Similarly, the weighted Laplacian $\Delta_f: \Gamma(S\otimes E)\to \Gamma(S\otimes E)$ is defined as 
\[\Delta_f\coloneqq-\nabla^*\nabla-\nabla_{\nabla f},\]
it is also self-adjoint with respect to the weighted $L^2$-inner product.
\begin{proposition}\label{Laplacian}
    The weighted Laplacian $\Delta_f$ is a non-positive operator and is self-adjoint. In fact,
    \[\int_{M}\langle\Delta_f \Psi,\Phi\rangle e^{-f}=-\int_{M}\langle  \nabla\Psi,\nabla\Phi\rangle e^{-f}=\int_{M}\langle \Psi,\Delta_f\Phi\rangle e^{-f}.\]
\end{proposition}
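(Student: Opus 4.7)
The plan is to mirror the proof of Proposition \ref{Dirac} and reduce the statement to an application of the weighted divergence theorem. I would fix an arbitrary point $p\in M$ together with a local orthonormal frame $\{e_i\}$ satisfying $\nabla e_i|_p = 0$, so that at $p$ one has $\nabla^*\nabla = -\sum_i \nabla_{e_i}\nabla_{e_i}$. Then at $p$,
\[\langle \Delta_f\Psi,\Phi\rangle = \sum_i \langle \nabla_{e_i}\nabla_{e_i}\Psi,\Phi\rangle - \langle \nabla_{\nabla f}\Psi,\Phi\rangle.\]
Using compatibility of the connection with the Hermitian metric, $e_i\langle \nabla_{e_i}\Psi,\Phi\rangle = \langle \nabla_{e_i}\nabla_{e_i}\Psi,\Phi\rangle + \langle \nabla_{e_i}\Psi,\nabla_{e_i}\Phi\rangle$, this rearranges to
\[\langle \Delta_f\Psi,\Phi\rangle = -\langle \nabla\Psi,\nabla\Phi\rangle + \Big(\sum_i e_i\langle \nabla_{e_i}\Psi,\Phi\rangle - \langle \nabla_{\nabla f}\Psi,\Phi\rangle\Big).\]

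Next, imitating the device in the proof of Proposition \ref{Dirac}, I would introduce the complex vector field $Y\in \Gamma(TM\otimes\mathbb{C})$ defined by $g(Y,Z) := \langle \nabla_Z\Psi,\Phi\rangle$ for all $Z\in\Gamma(TM)$. The parenthesized expression above then becomes $\mathrm{div}\,Y - g(Y,\nabla f) = \mathrm{div}_f Y$ at $p$, so pointwise
\[\langle \Delta_f\Psi,\Phi\rangle = -\langle \nabla\Psi,\nabla\Phi\rangle + \mathrm{div}_f Y.\]

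Finally, the identity $\mathrm{div}(e^{-f}Y) = e^{-f}\,\mathrm{div}_f Y$ together with Stokes' theorem on the closed manifold $M$ gives $\int_M (\mathrm{div}_f Y)\,e^{-f} = 0$. Integration of the previous equation against $e^{-f}\,dV_g$ thus yields
\[\int_M \langle \Delta_f\Psi,\Phi\rangle\, e^{-f} = -\int_M \langle \nabla\Psi,\nabla\Phi\rangle\, e^{-f}.\]
Since the right-hand side is Hermitian symmetric in $(\Psi,\Phi)$, the same computation applied to the pair $(\Phi,\Psi)$ gives the second equality $\int_M\langle \Delta_f\Psi,\Phi\rangle e^{-f} = \int_M\langle \Psi,\Delta_f\Phi\rangle e^{-f}$, and setting $\Phi=\Psi$ yields non-positivity. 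I do not expect any genuine obstacle here; the only point requiring care is the sign bookkeeping for the weighted divergence, which is handled in exactly the same way as in Proposition \ref{Dirac}.
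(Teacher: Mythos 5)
Your argument is correct and coincides with the paper's own proof: the same choice of geodesic normal frame at $p$, the same rearrangement via metric compatibility, and the same auxiliary complex vector field (your $Y$ is the paper's $U$) whose weighted divergence absorbs the remainder before integration by parts. The only addition is your explicit remark about Hermitian symmetry and the $\Phi=\Psi$ specialization, which the paper leaves implicit.
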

\begin{proof}
Once again, we fix an arbitrary point $p \in M$ and work with a local orthonormal frame $\{e_i\}$ such that $\nabla e_i = 0$ at $p$. Then, at $p$, 
\begin{align*}
\langle\Delta_f\Psi,\Phi\rangle&=\langle\nabla_{e_i}\nabla_{e_i}\Psi,\Phi\rangle-\langle\nabla_{\nabla_f}\Psi,\Phi\rangle\\
    &=-\langle\nabla_{e_i}\Psi,\nabla_{e_i}\Phi\rangle+e_i\langle\nabla_{e_i}\Psi,\Phi\rangle-\langle\nabla_{\nabla_f}\Psi,\Phi\rangle\\
    &=-\langle\nabla\Psi,\nabla\Phi\rangle+e_i\langle\nabla_{e_i}\Psi,\Phi\rangle-\langle\nabla_{\nabla_f}\Psi,\Phi\rangle.
\end{align*}\par
As before, the last two terms are a weighted divergence term. In fact, define a vector field $U\in\Gamma(TM\otimes\mathbb{C})$ on $M$ by
\[g(U,V)\coloneqq\langle\nabla_{V}\Psi,\Phi\rangle\]
for all vector field $V\in\Gamma(TM)$. Then
\[\mathrm{div}_fU=e_i\langle\nabla_{e_i}\Psi,\Phi\rangle-\langle\nabla_{\nabla_f}\Psi,\Phi\rangle.\]
The desired formula follows by integration by parts.
\end{proof}
Similar to the standard Lichnerowicz formula, a weighted Lichnerowicz
formula holds, which was first observed by Perelman \cite{perelman2002entropy}. Now, we state and prove it here.
\begin{lemma}[weighted Lichnerowicz formula] Let $(M^n, g, e^{-f}\mathrm{d}V_g)$ be a weighted closed spin Riemannian manifold, then
    \[D_{f,E}^{2}=-\Delta_{f}+\frac{1}{4}R_{f}+\mathcal{R}^E,\] where $\mathcal{R}^E$ is defined on simple elements $\sigma\otimes v\in\Gamma(S\otimes E)$ by $\mathcal{R}^E(\sigma\otimes v)=\frac{1}{2}\Sigma_{i,j=1}^{n}\left(e_i\cdot e_j\cdot\sigma\right)\otimes R^E(e_i,e_j)v$, $R^{E}$ is the curvature tensor of $E$.
\end{lemma}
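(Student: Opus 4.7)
The plan is to reduce the weighted identity to the standard twisted Lichnerowicz formula $D_E^2 = \nabla^*\nabla + \tfrac{1}{4}R + \mathcal{R}^E$ by expanding the square of $D_{f,E} = D_E - \tfrac{1}{2}\nabla f\cdot$ and collecting the cross terms. Writing
\[
D_{f,E}^2\Psi = D_E^2\Psi - \tfrac{1}{2}D_E(\nabla f\cdot \Psi) - \tfrac{1}{2}\nabla f\cdot D_E\Psi + \tfrac{1}{4}\nabla f\cdot \nabla f\cdot\Psi,
\]
the last term collapses immediately to $-\tfrac{1}{4}|\nabla f|^2\,\Psi$ via the Clifford relation $X\cdot X = -g(X,X)$. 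So the problem reduces to computing the anticommutator-like term $D_E(\nabla f\cdot\Psi) + \nabla f\cdot D_E\Psi$.

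I would compute this at an arbitrary $p\in M$ using a local orthonormal frame $\{e_i\}$ with $\nabla e_i(p)=0$, so that $D_E(\nabla f\cdot\Psi) = \sum_i e_i\cdot (\nabla_{e_i}\nabla f)\cdot \Psi + \sum_i e_i\cdot \nabla f\cdot \nabla_{e_i}\Psi$. For the second sum I apply $e_i\cdot \nabla f + \nabla f\cdot e_i = -2g(e_i,\nabla f)$ to rewrite it as $-\nabla f\cdot D_E\Psi - 2\nabla_{\nabla f}\Psi$. For the first sum, expanding $\nabla_{e_i}\nabla f = \sum_j \mathrm{Hess}(f)(e_i,e_j) e_j$ produces $\sum_{i,j}\mathrm{Hess}(f)_{ij}\, e_i\cdot e_j$; the symmetry of the Hessian kills the off-diagonal terms against the antisymmetry $e_i\cdot e_j + e_j\cdot e_i = -2\delta_{ij}$, and the diagonal contribution yields $-\Delta f$. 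Thus the cross terms amount to $-(\Delta f)\Psi - 2\nabla_{\nabla f}\Psi$.

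Assembling everything and inserting the standard Lichnerowicz formula gives
\[
D_{f,E}^2 = \nabla^*\nabla + \nabla_{\nabla f} + \tfrac{1}{4}\bigl(R + 2\Delta f - |\nabla f|^2\bigr) + \mathcal{R}^E,
\]
which is exactly $-\Delta_f + \tfrac{1}{4}R_f + \mathcal{R}^E$ by the definitions of $\Delta_f$ and $R_f$ recalled in the introduction.

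There is no serious obstacle: the only point requiring care is the algebraic manipulation of the Clifford expression $\sum_{i,j}\mathrm{Hess}(f)_{ij} e_i\cdot e_j$, where the symmetry of $\mathrm{Hess}(f)$ must be paired with the antisymmetry of the Clifford product to isolate $-\Delta f$ with the correct sign. Once that single identity is in hand, every other term matches the definitions of $\Delta_f$ and $R_f$ on the nose.
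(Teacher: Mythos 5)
Your proof is correct and follows essentially the same route as the paper's: expand the square of $D_{f,E}$, use the Clifford relation to collapse $\tfrac{1}{4}\nabla f\cdot\nabla f\cdot$, pair the symmetric Hessian against the antisymmetric Clifford product to extract $-\Delta f$ from the cross terms, and then insert the standard twisted Lichnerowicz formula. The only cosmetic difference is that you package the cross terms as the anticommutator $D_E(\nabla f\cdot\Psi)+\nabla f\cdot D_E\Psi$ before expanding in the frame, whereas the paper expands everything in the frame from the start; the algebra is the same.
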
 
\begin{proof}
    For any $\Phi\in\Gamma(S\otimes E)$, we have
    \begin{align*}
    D^2_{f,E}\Phi=&\left(D_E-\frac{1}{2}\nabla f\cdot \right)\left(D_E-\frac{1}{2}\nabla f\cdot \right)\Phi\\
    =&D_E^2\Phi-\frac{1}{2}\nabla f\cdot\left(e_i\cdot\nabla_{e_i}\Phi\right)-\frac{1}{2}\left(e_i\cdot\nabla_{e_i}\right)\left(\nabla f\cdot\Phi\right)-\frac{1}{4}\|\nabla f\|^2\Phi\\
    =&D_E^2\Phi-\frac{1}{2}\nabla f\cdot e_i\cdot\nabla_{e_i}\Phi-\frac{1}{2}e_i\cdot\langle\nabla_{e_i}\nabla f,e_j\rangle e_j\cdot\Phi\\&-\frac{1}{2}e_i\cdot\nabla f\cdot\nabla_{e_i}\Phi-\frac{1}{4}\|\nabla f\|^2\Phi\\
    =&D_E^2\Phi-\frac{1}{2}\left(e_i\cdot\nabla f+\nabla f\cdot e_i\right)\cdot\nabla_{e_i}\Phi-\frac{1}{2}\mathrm{Hess}f(e_i,e_j)e_i\cdot e_j\cdot\Phi\\
    &-\frac{1}{4}\|\nabla f\|^2\Phi\\
    =&\left(\nabla^*\nabla+\frac{1}{4}R+\mathcal{R}^E\right)\Phi+\langle\nabla f,e_i\rangle\nabla_{e_i}\Phi+\frac{1}{2}\left(\Delta f\right)\Phi-\frac{1}{4}\|\nabla f\|^2\Phi\\
    =&\left(\nabla^*\nabla+\nabla_{\nabla f}\right)\Phi+\frac{1}{4}\left(R+2\Delta f-\|\nabla f\|^2\right)\Phi+\mathcal{R}^E\Phi\\
    =&\left(-\Delta_{f}+\frac{1}{4}R_{f}+\mathcal{R}^E\right)\Phi.
     \end{align*}\par
    We have used the standard Lichnerowicz formula and the properties of Clifford multiplication.
\end{proof}
\section{Proof of Theorem \ref{main-theorem-2} }\label{section-proof}
In this section, We adopt Llarull’s setting to prove Theorem \ref{main-theorem-2} in two cases: even-dimensional case and odd-dimensional case as described in \cite{LlarullMR1600027}.
\begin{proof}[Proof of Theorem\ref{main-theorem-2}]
Let $\{e_i\}_{i=1}^{n}$ be a $g$-orthonormal frame near $p \in M$ and $\{\epsilon_i\}_{i=1}^{n}$ be a $g_{\mathbb{S}^{n}}$-orthonormal frame near $h(p) \in \mathbb{S}^{n}$. Moreover, we can require that there exist
non-negative scalars $\{\lambda_i\}_{i=1}^{n}$ such that $h_*(e_i)=\lambda_i\epsilon_i$. We will show that 
\begin{align}
    R_f&=n(n-1)\|\wedge^2 h_*\|
\end{align}
and
\begin{align} \lambda_i&=\|\wedge^2 h_*\|^{\frac{1}{2}}
\end{align}
for $1\leq i\leq n$.\par
\textit{Even Dimensional Case $n=2m\geq4$}:
Consider two complex spinor bundles $S$ over $M$ and $E_0$ over $\mathbb{S}^{2m}$, respectively. And consider the bundle $S\otimes E$ over $M$ for $E=h^*E_0$.
Let$\{\sigma_{\alpha}\}_{\alpha=1}^{2^{2m}}$ be a basis of $S$ and $\{v_{\beta}\}_{\beta=1}^{2^{2m}}$ be a basis of $E$, then for any $\Phi\in\Gamma(S\otimes E)$,
\[\Phi=a_{\alpha\beta}\sigma_\alpha\otimes v_{\beta}.\]
By Proposition \ref{Laplacian} and weighted Lichnerowicz formula, 
we have 
\begin{align*}
\int_{M}\langle D_{f,E}^2\Phi,\Phi\rangle e^{-f}=&\int_{M}\langle  \nabla\Phi,\nabla\Phi\rangle e^{-f}+\frac{1}{4}\int_{M}R_{f}\|\Phi\|^2e^{-f}+\int_{M}\langle\mathcal{R}^E\Phi,\Phi\rangle e^{-f}.\\
  \end{align*}  
According to Lemma 4.5 in \cite{LlarullMR1600027}, the curvature term has the following estimate,
\[\langle\mathcal{R}^E\Phi,\Phi\rangle\geq-\frac{1}{4}\sum_{i\neq j=1}^{2m}\lambda_i\lambda_j\|\Phi\|^2.\]
Consequently,
\begin{eqnarray*}
\int_{M}\langle D_{f,E}^2\Phi,\Phi\rangle e^{-f}\geq\int_{M}\|\nabla\Phi\|^2 e^{-f}+\frac{1}{4}\int_{M}(R_{f}-\sum_{i\neq j=1}^{2m}\lambda_i\lambda_j)\|\Phi\|^2e^{-f}.\\
  \end{eqnarray*}
  \par
Next, we prove $R_{f}=2m(2m-1)\|\wedge^2 h_*\|$ by contradiction. If $R_f(x)>2m(2m-1)\|\wedge^2 h_*\|(x)$ for some $x\in M$, then $ker (D_{f,E}^2)=0$, by Proposition \ref{Dirac}, $ker (D_{f,E})=0$. The weighted Dirac operator
$D_{f,E}: \Gamma((S\otimes E^+)\oplus(S\otimes E^-))\to \Gamma((S\otimes E^+)\oplus(S\otimes E^-))$ preserves the direct sum, so $D_{f,E^+}\coloneqq D_{f,E}|_{S\otimes E^+}$ has zero kernel, i.e. $ker(D_{f,E^+})=0$. We also have that
$D_{f,E^+}=D^+_{f,E^+}\oplus D^-_{f,E^+}$, where $D^{\pm}_{f,E^+}: \Gamma(S^{\pm}\otimes E^+)\to \Gamma(S^{\mp}\otimes E^+)$. Since $0=ker(D_{f,E^+})=ker(D^+_{f,E^+})\oplus ker(D^-_{f,E^+})$, we have $ker(D^+_{f,E^+})=ker(D^-_{f,E^+})=0$. Therefore the index of $D^+_{f,E^+}$ is given by
\[ Index(D^+_{f,E^+})=ker(D^+_{f,E^+})-ker(D^-_{f,E^+})=0.\]
We can define an elliptic operator $D^+_{f,E^+}(t)\coloneqq D^+_{E^+}-\frac{t}{2}\nabla f \cdot$ for $0\leq t\leq1$. It is easy to see that the weighted Dirac operator $D^+_{f,E^+}$ is homotopic to the standard Dirac operator $D^+_{E^+}$, where $D^+_{E^+}:\Gamma(S^{\pm}\otimes E^+)\to \Gamma(S^{\mp}\otimes E^+)$. Since the index is homotopy invariant, we can deduce that $Index(D^+_{E^+})=0$. But Atiyah-Singer index theorem states that $Index(D^+_{E^+})\neq0$ (it is equals to Euler characteristic of $\mathbb{S}^{2m}$ times degree of $h$), which leads to a contradiction. Therefore, $R_{f}\equiv2m(2m-1)\|\wedge^2 h_*\|$ on $M$. As $Index(D^+_{f,E^+})\neq0,\, ker(D_{f,E})\neq0$, there exists $0\neq\Psi\in\Gamma(S\otimes E)$ such that $D_{f,E}\Psi=0$ and
$$0\geq\int_{M}\| \nabla\Psi\|^2 e^{-f}+\frac{1}{4}\int_{M}\left(2m(2m-1)\|\wedge^2 h_*\|-\sum_{i\neq j=1}^{2m}\lambda_i\lambda_j\right)\|\Psi\|^2e^{-f},$$
combine with $\lambda_{i}\lambda_{j}\leq\|\wedge^2 h_*\|$ for $1\leq i\neq j\leq2m$, we have $\lambda_{i}\lambda_{j}=\|\wedge^2 h_*\|$ for $1\leq i\neq j\leq2m$. Therefore, $\lambda_i=\|\wedge^2 h_*\|^{\frac{1}{2}}$ for $1\leq i\leq 2m$.\par
   \textit{Odd Dimensional Case $n=2m-1\geq3$}:
Let $M\times \mathbb{S}_r^1$ be the Riemannian product of $M$ with the circle $\mathbb{S}_r^1$ of radius $r\geq1$. Consider the following map
\[(M\times \mathbb{S}_r^1,g+r^2ds^2)\xrightarrow{h\times\frac{\mathrm{id}}{r}}(\mathbb{S}^{2m-1}\times\mathbb{S}^1,g_{\mathbb{S}^{2m-1}}+ds^2)\xrightarrow{\,\gamma\,}\mathbb{S}^{2m-1}\wedge\mathbb{S}^{1}\cong\mathbb{S}^{2m},\]
where $h\times\frac{\mathrm{id}}{r}$ is given by $(h\times\frac{\mathrm{id}}{r})(x,t)=(h(x),\frac{t}{r})$ for all $(x,t)\in M\times \mathbb{S}_r^1$, and $\gamma$ is a distance non-increasing map of nonzero degree.\par
Define $\tilde{h}=\gamma\circ(h\times\frac{\mathrm{id}}{r})$. Consider two complex spinor bundles $S$ over $M\times \mathbb{S}_r^1$ and $E_0$ over $\mathbb{S}^{2m}$, respectively. And consider the bundle $S\otimes E$ over $M\times \mathbb{S}_r^1$ for $E=\tilde{h}^*E_0$.\par
Let $\{\tilde{e}_i\}_{i=1}^{2m}$ be a $g+r^2ds^2$-orthonormal frame near $(p,\theta) \in M\times \mathbb{S}_r^1$ such that $\tilde{e}_1,...,\tilde{e}_{2m-1}$ are tangent to $M$ and $\tilde{e}_{2m}$ is tangent to $\mathbb{S}_r^1$. Let $\{\tilde{\epsilon}_i\}_{i=1}^{2m}$ be a $g_{\mathbb{S}^{2m}}$-orthonormal frame near $\tilde{h}(p,\theta) \in \mathbb{S}^{2m}$. Moreover, we can require that there exist non-negative scalars $\{\tilde{\lambda}_i\}_{i=1}^{2m}$ such that $\tilde{h}_*(\tilde e_i)=\tilde{\lambda}_i\tilde\epsilon_i$. Note that 
\[\tilde{\lambda}_i^2=g_{\mathbb{S}^{2m}}(\tilde{h}_*(\tilde e_{i}),\tilde{h}_*(\tilde e_{i}))\leq g_{\mathbb{S}^{2m-1}}(h_*(\tilde e_{i}),h_*(\tilde e_{i}))\leq\|h_*\|^2\leq K^2,\]
\[\tilde\lambda_i^2\tilde\lambda_j^2=g_{\mathbb{S}^{2m}}(\tilde{h}_*(\tilde e_{i}\wedge\tilde e_j),\tilde{h}_*(\tilde e_{i}\wedge\tilde e_j))\leq g_{\mathbb{S}^{2m-1}}(h_*(\tilde e_{i}\wedge\tilde e_j),h_*(\tilde e_{i}\wedge\tilde e_j))\leq\|\wedge^2 h_*\|^2\]
and
\[\tilde\lambda_{2m}^2=g_{\mathbb{S}^{2m}}(\tilde{h}_*(\tilde e_{2m}),\tilde{h}_*(\tilde e_{2m}))\leq ds^2(\frac{\tilde e_{2m}}{r},\frac{\tilde e_{2m}}{r})=\frac{1}{r^4}\leq\frac{1}{r^2},\]
where $K$ is a positive constant independent of $r$ and $1\leq i\neq j\leq2m-1$. Thus, $0\leq\tilde\lambda_i\leq\|h_*\|\leq K$, $0\leq\tilde\lambda_i\tilde\lambda_j\leq\|\wedge^2 h_*\|$ and $0\leq\tilde\lambda_{2m}\leq\frac{1}{r}$.\par
As before, according to Lemma 4.5 in \cite{LlarullMR1600027}, we have 
\begin{equation*}
\begin{split}
\langle\mathcal{R}^E\Phi,\Phi\rangle=&\frac{1}{4}\sum_{i\neq j=1}^{2m-1}\sum_{k,l}\sum_{\alpha,\beta}\tilde\lambda_i\tilde\lambda_ja_{\alpha\beta}a_{kl}\langle e_i\cdot e_j\cdot\sigma_{\alpha}\otimes\epsilon_j\cdot\epsilon_i\cdot v_\beta,\sigma_{k}\otimes v_l\rangle\\
&+\frac{1}{4}\sum_{i=1}^{2m-1}\sum_{k,l}\sum_{\alpha,\beta}\tilde\lambda_i\tilde\lambda_{2m}a_{\alpha\beta}a_{kl}\langle e_i\cdot e_{2m}\cdot\sigma_{\alpha}\otimes\epsilon_{2m}\cdot\epsilon_i\cdot v_\beta,\sigma_{k}\otimes v_l\rangle\\
&+\frac{1}{4}\sum_{j=1}^{2m-1}\sum_{k,l}\sum_{\alpha,\beta}\tilde\lambda_{2m}\tilde\lambda_{j}a_{\alpha\beta}a_{kl}\langle e_{2m}\cdot e_{j}\cdot\sigma_{\alpha}\otimes\epsilon_{j}\cdot\epsilon_{2m}\cdot v_\beta,\sigma_{k}\otimes v_l\rangle\\
&\geq-\frac{1}{4}\sum_{i\neq j=1}^{2m-1}\tilde\lambda_i\tilde\lambda_j\|\Phi\|^2-\frac{1}{2}\sum_{i=1}^{2m-1}\tilde\lambda_i\tilde\lambda_{2m}\|\Phi\|^2\\
&\geq-\frac{1}{4}\sum_{i\neq j=1}^{2m-1}\tilde\lambda_i\tilde\lambda_j\|\Phi\|^2-\frac{(2m-1)K}{2r}\cdot\|\Phi\|^2.
\end{split}
\end{equation*}
Consequently,
\begin{equation}\label{eq-key}
\begin{split}
\int_{M\times\mathbb{S}_r^1}\langle D_{f,E}^2\Phi,\Phi\rangle e^{-f}\geq&\int_{M\times \mathbb{S}_r^1}\| \nabla\Phi\|^2 e^{-f}+\frac{1}{4}\int_{M\times \mathbb{S}_r^1}R_{f}\|\Phi\|^2e^{-f}\\&-\frac{1}{4}\int_{M\times \mathbb{S}_r^1}\left(\sum_{i\neq j=1}^{2m-1}\tilde\lambda_i\tilde\lambda_j+\frac{2(2m-1)K}{r}\right)\|\Phi\|^2e^{-f}.
  \end{split}
  \end{equation}
\par
Next, we follow Wang-Xie \cite{W-XMR4853305} to prove $R_{f}=(2m-1)(2m-2)\|\wedge^2 h_*\|$ by contradiction. Assume to the contrary that the inequality $R_{f}\geq(2m-1)(2m-2)\|\wedge^2 h_*\|$ is strict somewhere. Then there exists $x_0\in M$ and two positive constants $\delta,\delta_0>0$ such that for any $y\in U_{\delta}(x_0)$ with
\[\left(R_{f}-(2m-1)(2m-2)\|\wedge^2 h_*\|\right)(y)>\delta_0,\]
where $U_{\delta}(x_0)$ is a $\delta$-neighborhood of $x_0$ in $M$.\par
By Atiyah-Singer index theorem and index is homotopy invariant, we know that $0\neq Index(D^+_{E^+})=Index(D^+_{f,E^+})$ (it is equal to Euler characteristic of $\mathbb{S}^{2m}$ times degree of $\tilde{h}$). Therefore, $ker(D_{f,E})\neq0$, there exists $0\neq\Psi\in\Gamma(S\otimes E)$ such that $D_{f,E}\Psi=0$. By Lemma 2.8 in \cite{W-XMR4853305}, we have
\begin{equation}\label{Lem-W}
\begin{aligned}
\int_{M\times \mathbb{S}_r^1}\|\Psi\|^2e^{-f}\leq C\int_{U_{\delta}(x_0)\times \mathbb{S}_r^1}\|\Psi\|^2e^{-f}+C\int_{M\times \mathbb{S}_r^1}\|\nabla\Psi\|^2e^{-f},
\end{aligned}
  \end{equation}
here we take the metric $e^{-f}\cdot\langle\,,\rangle$ on spinor bundle $S\otimes E$ and $C$ is a positive constant which is independent of $r$.\par
Since $D_{f,E}\Psi=0$, combining inequality \eqref{eq-key} with inequality \eqref{Lem-W},  it follows that 
\begin{equation*}
\begin{split}
0\geq&\int_{M\times \mathbb{S}_r^1}\| \nabla\Psi\|^2 e^{-f}+\frac{1}{4}\int_{M\times \mathbb{S}_r^1}\left(R_f-\sum_{i\neq j=1}^{2m-1}\tilde\lambda_i\tilde\lambda_j\right)\|\Psi\|^2e^{-f}\\&-\frac{2m-1}{2r}\int_{M\times \mathbb{S}_r^1}\|{h}_*\|\|\Psi\|^2e^{-f}\\
\geq&\int_{M\times \mathbb{S}_r^1}\| \nabla\Psi\|^2 e^{-f}+\frac{\delta_0}{4}\int_{U_{\delta}(x_0)\times \mathbb{S}_r^1}\|\Psi\|^2e^{-f}\\&-\frac{(2m-1)K}{2r}\int_{M\times \mathbb{S}_r^1}\|\Psi\|^2e^{-f}\\
\geq&\left(1-\frac{(2m-1)KC}{2r}\right)\int_{M\times \mathbb{S}_r^1}\| \nabla\Psi\|^2 e^{-f}\\&+\left(\frac{\delta_0}{4}-\frac{(2m-1)KC}{2r}\right)\int_{U_{\delta}(x_0)\times \mathbb{S}_r^1}\|\Psi\|^2e^{-f}.
\end{split}
\end{equation*}
Since $C$ and $K$ are independent of $r$, for a given sufficiently large $r$, the above estimates imply that $\Psi$ vanishes on $U_{\delta}(x_0)\times \mathbb{S}_r^1$ and $\nabla\Psi$ vanishes on $M\times \mathbb{S}_r^1$. This together with the inequality \eqref{Lem-W} implies that $\Psi$ vanishes on $M\times \mathbb{S}_r^1$, which leads to a contradiction. Hence, we have proved that $R_{f}=(2m-1)(2m-2)\|\wedge^2 h_*\|$. By the same argument, we also have 
\[\sum_{i\neq j=1}^{2m-1}\tilde\lambda_i\tilde\lambda_j=(2m-1)(2m-2)\|\wedge^2 h_*\|,\]
combining with $\tilde\lambda_{i}\tilde\lambda_{j}\leq\|\wedge^2 h_*\|$ for $1\leq i\neq j\leq2m-1$, we see that $\tilde\lambda_{i}\tilde\lambda_{j}=\|\wedge^2 h_*\|$ for $1\leq i\neq j\leq2m-1$. Suppose for $i<j$ that $\tilde e_i\wedge\tilde e_j=\sum_{\bar{i}<\bar{j}}T_{ij}^{\bar{i}\bar{j}}e_{\bar{i}}\wedge e_{\bar{j}}$ near $p\in M$, note that $\sum_{\bar{i}<\bar{j}}\left(T_{ij}^{\bar{i}\bar{j}}\right)^2=1$ since $\tilde e_i\wedge\tilde e_j$ is a unit 2-vector. We have

\begin{align*}
\|\wedge^2 h_*\|^2=\tilde\lambda_{i}^2\tilde\lambda_{j}^2&=g_{\mathbb{S}^{2m}}(\tilde{h}_*(\tilde e_{i}\wedge\tilde e_j),\tilde{h}_*(\tilde e_{i}\wedge\tilde e_j))\\
&\leq g_{\mathbb{S}^{2m-1}}(h_*(\tilde e_{i}\wedge\tilde e_j),h_*(\tilde e_{i}\wedge\tilde e_j))\\
&=\sum_{\bar{i}<\bar{j}}\sum_{\bar{k}<\bar{l}}T_{ij}^{\bar{i}\bar{j}}T_{ij}^{\bar{k}\bar{l}}g_{\mathbb{S}^{2m-1}}(h_*( e_{\bar{i}}\wedge e_{\bar{j}}),h_*(\tilde e_{\bar{k}}\wedge\tilde e_{\bar{l}}))\\
&=\sum_{\bar{i}<\bar{j}}\sum_{\bar{k}<\bar{l}}T_{ij}^{\bar{i}\bar{j}}T_{ij}^{\bar{k}\bar{l}}\lambda_{\bar{i}}\lambda_{\bar{j}}\lambda_{\bar{k}}\lambda_{\bar{l}}\delta_{\bar{i}\bar{k}}\delta_{\bar{j}\bar{l}}\\
&=\sum_{\bar{i}<\bar{j}}\left(T_{ij}^{\bar{i}\bar{j}}\right)^2\lambda_{\bar{i}}^2\lambda_{\bar{j}}^2\\
&\leq\|\wedge^2 h_*\|^2.\\
\end{align*}
Thus, $\lambda_{i}\lambda_{j}=\|\wedge^2 h_*\|$ for $1\leq i\neq j\leq 2m-1$, which implies that $\lambda_{i}=\|\wedge^2 h_*\|^{\frac{1}{2}}$ for $1\leq i\leq 2m-1$.\par

So far, we have proved that $R_f=n(n-1)\|\wedge^2 h_*\|$ and $\lambda_i=\|\wedge^2 h_*\|^{\frac{1}{2}}$ for $1\leq i\leq n$. Define $u\coloneqq\|\wedge^2 h_*\|=\lambda_i^2$ and $U=\{x\in M:u(x)\neq0\}$, note that $u$ is a smooth function on $M$. Since $h$ has non-zero degree, $U$ is non-empty. Thus, $h^*g_{\mathbb{S}^n}=u\cdot g$ on $U$. By the formula of scalar curvature under conformal change, we have on $U$
\begin{equation}
n(n-1)=h^*[R(g_{\mathbb{S}^n})]=u^{-1}R(g)-(n-1)u^{-2}\Delta u-\frac{(n-1)(n-6)}{4}u^{-3}|\nabla u|^2.
\end{equation}
Since $R_f(g)=R(g)+2\Delta f-|\nabla f|^2=n(n-1)u$, $u$ and $\nabla u$ vanish on $M\setminus U$, we can deduce that 
\begin{equation}\label{main-eq}
u^ke^{lf}\left(-2u^{2}\Delta f+u^{2}|\nabla f|^2-(n-1)u\Delta u-\frac{1}{4}(n-1)(n-6)|\nabla u|^2\right)=0
\end{equation}
on the entire $M$ for all $k\geq0$ and $l\in R$. By integration by parts, we have 
\begin{equation}\label{eq-ibp}
    \begin{split}
0=&(1+2l)\int_{M}u^{k+2}e^{lf}|\nabla f|^2\\
&+(n-1)\left(k+1-\frac{n-6}{4}\right)\int_{M}u^ke^{lf}|\nabla u|^2\\
&+(2k+4+ln-l)\int_{M}u^{k+1}e^{lf}g(\nabla u,\nabla f).
    \end{split}
\end{equation}
By taking 
\begin{align}
l=\frac{-n^2+(11+4\sqrt{2})n-10-4\sqrt2}{2(n-1)^2}~~~ \text{and}~~
k=\frac{n-2}{4}\geq0,
\end{align}
we have
\begin{align*}
1+2l=&\left(\frac{1+2\sqrt{2}}{\sqrt{n-1}}\right)^2,\\
2k+4+ln-l=&2(4+\sqrt{2}),\\
(n-1)\left(k+1-\frac{n-6}{4}\right)=&2(n-1).
\end{align*}
It follows from equation \eqref{eq-ibp} that 
\begin{equation*}
    \int_{M}u^{\frac{n-2}{4}}e^{lf}|Au\nabla f+B\nabla u|^2=0,
\end{equation*}
where $A=\frac{1+2\sqrt{2}}{\sqrt{n-1}}$, $B=\sqrt{2(n-1)}$. Thus, $u^{\frac{n-2}{4}}|Au\nabla f+B\nabla u|^2=0$ on $M$. Moreover, we have
\begin{equation}\label{eq-equiv}
    Au\nabla f+B\nabla u\equiv0~~~\text{on}\,\,M.
\end{equation}
Consequently,
\begin{equation}\label{eq-div}
\begin{split}
0=&u\times\mathrm{div}\left(Au\nabla f+B\nabla u\right)\\=&Au^2\Delta f+Aug(\nabla u,\nabla f)+Bu\Delta u\\
=&Au^2\Delta f-B|\nabla u|^2+Bu\Delta u.\\
\end{split}
\end{equation}
Plugging $u\nabla f=-\frac{B}{A}\nabla u$ and equation \eqref{eq-div}  into equation \eqref{main-eq} for $k=0,\,m=0$, we obtain
\[\left(2\frac{B}{A}-(n-1)\right)u\Delta u+\left(\frac{B^2}{A^2}-2\frac{B}{A}-\frac{1}{4}(n-1)(n-6)\right)|\nabla u|^2=0,\]
by integration by parts, we see that
\[\frac{1}{196}(n-1)\left((23-32\sqrt{2})n-30+144\sqrt{2}\right)\int_{M}|\nabla u|^2=0.\]
Therefore, $\nabla u\equiv0.$ Since $h$ has non-zero degree, $u$ is a positive constant on M, i.e. $u\equiv c>0.$ It follows that $h:(M,c\cdot g)\to(\mathbb{S}^n,g_{\mathbb{S}^n})$ is a local isometry. Since $M$ is closed and $\mathbb{S}^n$ is simply connected, we conclude that $h:(M,c\cdot g) \to (\mathbb{S}^n, g_{\mathbb{S}^n})$ is an isometry. $f$ is a constant comes from equation \eqref{eq-equiv}. This completes the proof.
\end{proof}
\section{The Llarull type theorem in the form of $\mathbb{S}^k\texorpdfstring{\times}{x}\mathbb{T}^{n-k}$}\label{sec-HSS}
In this section, we follow the approach of Hao-Shi-Sun \cite{HSSMR4855340} to establish the rigidity result of $\mathbb{S}^k\times\mathbb{T}^{n-k}$. We prove Theorem \ref{HSS}, specifically addressing the case $\mathbb{S}^{n-1}\times\mathbb{T}^{1}$. For the case $\mathbb{S}^{k}\times\mathbb{T}^{n-k}$, $2\leq n-k\leq n-3$, we use the torical symmetrization technique to achieve dimension reduction as described in \cite[Remark 3.3]{HSSMR4855340}(see also \cite[Proposition 2.2]{ZJMR4108854}).\par
Since Theorem \ref{T-extension} is crucial for the proof of Theorem \ref{HSS}, we first prove it.
\begin{proof}[Proof of Theorem \ref{T-extension}]
    By direct calculation, we have
    \begin{equation}\label{eqHSS}
        R(\bar{g})=R(g)+2\Delta f-|\nabla f|^2-\sum_{i=1}^{k}|\nabla f_i|^2,
    \end{equation}
    where $f=\sum_{i=1}^kf_i$.
Since $R(\bar{g})\geq n(n-1)\|\wedge^2 h_*\|$, $R(g)+2\Delta f-|\nabla f|^2\geq n(n-1)\|\wedge^2 h_*\|$. By Theorem \ref{main-theorem-2}, we obtain there exists a constant $c>0$ such that $h:(M,c\cdot g) \to (\mathbb{S}^n, g_{\mathbb{S}^n})$ is an isometry and $f$ is a constant. Note $\|\wedge^2 h_*\|=c$, by equation \eqref{eqHSS}, $f_i$ is a constant for each $1\leq i\leq k$.
\end{proof}
Recall Theorem \ref{HSS} for the specific case $\mathbb{S}^{n-1}\times\mathbb{S}^{1}$.
\begin{theorem}
    Suppose $4\leq n\leq7$ . Let $(M^n, g)$ be a closed spin Riemannian manifold. If $h:(M,g) \to (\mathbb{S}^{n-1}\times\mathbb{S}^{1}, g_{\mathbb{S}^{n-1}}+d\theta^2)$ is a smooth map of non-zero degree such that $R(g)\geq (n-1)(n-2)\|\wedge^2 h_*\|$, then there exists a constant $c>0$ such that $(M,g)$ is locally isometric to $(\mathbb{S}^{n-1}\times\mathbb{S}^{1}, \frac{1}{c}\cdot(g_{\mathbb{S}^{n-1}}+d\theta^2))$.
\end{theorem}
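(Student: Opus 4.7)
The plan is to reduce the theorem to Theorem \ref{T-extension} in dimension $n-1$ via a codimension-one stable minimal hypersurface argument, in the spirit of Schoen--Yau and Hao--Shi--Sun \cite{HSSMR4855340}. Write $h = (h_1, h_2)$. Since $h$ has nonzero degree, the pullback $h_2^{*}[d\theta]$ is a nontrivial class in $H^1(M;\mathbb{Z})$, Poincar\'e dual to a nontrivial two-sided class in $H_{n-1}(M;\mathbb{Z})$. A generic level $\Sigma_0 = h_2^{-1}(\theta_0)$ represents this class, and integration over the fibre gives $\int_{\Sigma_0} (h_1|_{\Sigma_0})^{*}[\mathrm{vol}_{\mathbb{S}^{n-1}}] = \deg(h)\cdot\mathrm{vol}(\mathbb{S}^{n-1}) \neq 0$, so $h_1|_{\Sigma_0}$ has nonzero degree. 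First I would replace $\Sigma_0$ by an area-minimizer $\Sigma$ in its integral homology class, which is smooth since $\dim\Sigma = n-1\leq 6$, and the degree of $h_1|_\Sigma$ is preserved.

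Next I would use stability of $\Sigma$ to manufacture a weighted warped extension with large enough scalar curvature. Let $u > 0$ be the first Jacobi eigenfunction, so $-\Delta_\Sigma u = (|A|^2 + \mathrm{Ric}_M(\nu,\nu))u + \lambda_1 u$ with $\lambda_1 \geq 0$. Set $f_1 = -\log u$ and form the warped $\mathbb{T}^1$-extension $(\Sigma\times\mathbb{S}^1, \bar g)$ with $\bar g = g_\Sigma + e^{-2f_1}d\theta^2$. Using formula \eqref{eqHSS}, the Gauss equation $R_{g_\Sigma} = R_M - 2\mathrm{Ric}_M(\nu,\nu) - |A|^2$ for minimal $\Sigma$, and the eigenvalue equation, one computes
\[
R(\bar g) \;=\; R_{g_\Sigma} + 2\Delta_\Sigma f_1 - 2|\nabla f_1|^2 \;=\; R_M + |A|^2 + 2\lambda_1 \;\geq\; R_M.
\]
Since projection $\mathbb{S}^{n-1}\times\mathbb{S}^1 \to \mathbb{S}^{n-1}$ is area non-increasing, $\|\wedge^2 (h_1|_\Sigma)_*\| \leq \|\wedge^2 h_*\|$ pointwise on $\Sigma$, and the hypothesis $R_M \geq (n-1)(n-2)\|\wedge^2 h_*\|$ yields $R(\bar g) \geq (n-1)(n-2)\|\wedge^2(h_1|_\Sigma)_*\|$. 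Theorem \ref{T-extension} applies in dimension $n-1\geq 3$ and produces a constant $c > 0$ with $h_1|_\Sigma : (\Sigma, c\cdot g_\Sigma) \to \mathbb{S}^{n-1}$ an isometry and $f_1$ constant. Tracing back the equalities forces $u$ constant, $\lambda_1 = 0$, $|A|\equiv 0$ (so $\Sigma$ is totally geodesic), and $\mathrm{Ric}_M(\nu,\nu)\equiv 0$ on $\Sigma$.

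Finally, I would upgrade this slicewise rigidity to a local product structure near $\Sigma$. Because $\lambda_1 = 0$ with constant first eigenfunction, the constant function lies in the kernel of the Jacobi operator, so the unit normal $\nu$ generates a one-parameter family of minimal hypersurfaces $\Sigma_t$ foliating a tubular neighborhood. Rerunning the previous step on each $\Sigma_t$ shows they are all isometric to round spheres of the same radius $c^{-1/2}$, totally geodesic, with vanishing normal Ricci, and then the normal geodesic flow identifies a neighborhood of $\Sigma$ in $M$ with $(\Sigma\times(-\varepsilon,\varepsilon), g_\Sigma + dt^2)$. After normalization this yields the claimed local isometry to $(\mathbb{S}^{n-1}\times\mathbb{S}^1, c^{-1}(g_{\mathbb{S}^{n-1}} + d\theta^2))$. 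The main obstacle I expect is precisely this propagation step: verifying either (i) that each nearby minimal slice inherits the same hypotheses of Theorem \ref{T-extension} with the same area-distortion bound so that it can be re-applied, or (ii) a direct de Rham-type splitting from the equality data $|A|=0$, $\mathrm{Ric}(\nu,\nu)=0$, $\lambda_1=0$ that produces a parallel unit vector field on a neighborhood of $\Sigma$. For the remaining cases $3\leq k\leq n-2$ of Theorem \ref{HSS}, the torical symmetrization of \cite[Remark 3.3]{HSSMR4855340} reduces them to the codimension-one case treated above.
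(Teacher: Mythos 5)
Your overall strategy matches the paper's proof exactly for the first two stages: locate an area-minimizing hypersurface $\Sigma$ in the class $h^{-1}(\mathbb{S}^{n-1}\times\{\theta_0\})$ (smooth since $n-1\leq 6$), verify $h_1|_\Sigma$ has nonzero degree, form the warped circle bundle $g_\Sigma+\phi^2 d\theta^2$ using the first Jacobi eigenfunction $\phi$, compute $R(\bar g)=R_M+|A|^2+2\lambda_1$, and invoke Theorem \ref{T-extension} to conclude $\phi$ constant, $\lambda_1=0$, $|A|\equiv0$, $\mathrm{Ric}(\nu,\nu)\equiv0$, and that $(\Sigma,g|_\Sigma)$ is a round sphere. (Your computation via formula \eqref{eqHSS} with $f_1=-\log\phi$ is a rephrasing of the paper's identical computation.)

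The gap, which you correctly flag as the main obstacle, is the propagation step, and your proposed fix does not work as stated. Having $\lambda_1=0$ with a constant Jacobi eigenfunction does \emph{not} produce a one-parameter family of minimal hypersurfaces; the implicit function theorem applied to the mean curvature map degenerates exactly because $0$ is in the spectrum. What you do get (by the Brendle/Ye construction cited as \cite[Proposition 3.2]{B2MR2765731} in the paper) is a local foliation $\{\Sigma_t\}$ by constant-mean-curvature slices with $H_0=0$, but there is no a priori reason the $H_t$ vanish for $t\neq0$. The paper closes this gap with a separate brane-functional argument: assume $H_{t_0}=\epsilon_0>0$ for some $t_0>0$, minimize the functional $E(\Omega)=\mathcal{H}^{n-1}(\partial^*\Omega\setminus\Sigma)-\epsilon_1\int(\chi_\Omega-\chi_{\bar\Omega})$ over Caccioppoli sets trapped in $\Omega_{t_0}$ (with $0<\epsilon_1<\epsilon_0$), obtain a smooth $E$-stable hypersurface $\hat\Sigma$ of constant mean curvature $\epsilon_1$ homologous to $\Sigma$, and then feed the resulting stability inequality $R(\tilde g)\geq(n-1)(n-2)\|\wedge^2\hat h_{1*}\|+\epsilon_1^2$ into Theorem \ref{T-extension} to force $\epsilon_1=0$, a contradiction. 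Only after establishing $H_t\equiv0$ (whence $\mathrm{Area}(\Sigma_t)=\mathrm{Area}(\Sigma)$ and each $\Sigma_t$ is again area-minimizing) can you rerun the rigidity argument on each slice, and then the lapse function $\psi$ of the foliation is shown to be harmonic, hence constant on each slice, which after reparametrization yields $\varPhi^*(g)=dr^2+g|_\Sigma$ and the local isometry. Without the brane trick, neither of your two suggested routes (rerunning on nearby slices without knowing they are minimal, or a de Rham-type splitting from the slicewise equality data alone) is justified, so the propagation step genuinely needs this additional ingredient.
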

\begin{proof}
    Since $h$ has non-zero degree, according to Sard’s theorem and the geometric measure theory, there exist a $\theta_0\in \mathbb{S}^{1}$ such that $0\neq[h^{-1}(\mathbb{S}^{n-1}\times\{\theta_0\})] \in H_{n-1}(M,\mathbb{Z})$ and a closed 2-sided area-minimizing hypersurface $\Sigma^{n-1} \in [h^{-1}(\mathbb{S}^{n-1}\times\{\theta_0\})]$, and $h_1\coloneqq P_1\circ h|_{\Sigma}: \Sigma\to\mathbb{S}^{n-1}$ has non-zero degree, where $P_1:\mathbb{S}^{n-1}\times\mathbb{S}^{1}\to \mathbb{S}^{n-1}$ is the canonical projection. By the second variation of stable minimal hypersurface, the associated Jacobi operator  $$J_{\Sigma}\coloneqq-\Delta_{\Sigma}-(\mathrm{Ric}_g(\nu,\nu)+|A_\Sigma|^2)=-\Delta_{\Sigma}+\frac{1}{2}(R_{\Sigma}-R(g)-|A_\Sigma|^2)$$is non-negative, where $A_{\Sigma}$ is the second fundamental form of $\Sigma$ and $\nu$ is the unit normal vector of $\Sigma$. Let $\phi$ be the first eigenfunction such that 
    \begin{equation}\label{stability}
        \begin{split}
            J_{\Sigma}\phi=&-\Delta_{\Sigma}\phi-(\mathrm{Ric}_g(\nu,\nu)+|A_\Sigma|^2)\phi\\=&-\Delta_{\Sigma}\phi+\frac{1}{2}(R_{\Sigma}-R(g)-|A_\Sigma|^2)\phi\\
            =&\lambda\phi,
        \end{split}
    \end{equation}
    where $\lambda\geq0$ is the first eigenvalue. By the non-vanishing Theorem in \cite[page 47]{Gromov2}, $\phi>0$. Since $\Sigma$ has trivial normal bundle, it is also a spin manifold. Now, we consider the manifold $(\Sigma\times\mathbb{S}^{1}, g_{\phi}=g|_\Sigma+\phi^2d\theta^2)$, which is spin, by direct calculation, we have on $\Sigma$
    \begin{align*}
    R(g_\phi)=&R_{\Sigma}-\frac{2\Delta_{\Sigma}\phi}{\phi}\\=&R(g)+|A_\Sigma|^2+2\lambda\\
    \geq&(n-1)(n-2)\|\wedge^2 (h|_{\Sigma})_*\|\\
    \geq&(n-1)(n-2)\|\wedge^2 h_{1*}\|.
    \end{align*}
    Since $h_1: \Sigma\to\mathbb{S}^{n-1}$ has non-zero degree, by Theorem \ref{T-extension}, we deduce that $(\Sigma,g|_\Sigma)$ is isometric to $(\mathbb{S}^{n-1},\frac{1}{c}\cdot g_{\mathbb{S}^{n-1}})$ for a constant $c>0$, $\|\wedge^2 h_{1*}\|=c$ and $\phi$ is a constant. Utilizing equation \eqref{stability}, we obtain $\lambda=0$, $A_{\Sigma}=0$ and $\mathrm{Ric}_g(\nu,\nu)=0$. According to Proposition 3.2 in \cite{B2MR2765731}(see also Lemma 3.3 in \cite{ZJMR4108854}), we can construct a local foliation $\{\Sigma_{t}\}_{-\epsilon \leq t\leq \epsilon}$ in $M$ such that $\Sigma_{t}$ are of constant mean curvature and $\Sigma_{0}=\Sigma$, let $H_t$ denote the mean curvature of $\Sigma_t$. We claim that $H_t\equiv0$ for any $t\in(-\epsilon,\epsilon)$. It suffices to prove this for nonnegative $t$, as the argument for the negative side is analogous. For contradiction, suppose that the assertion is not true, then there exists a $t_0>0$ such that $H_{t_0}=\epsilon_0>0$, since otherwise $H_{t}\leq0$ and the area-minimizing property of $\Sigma$ will imply $H_{t}\equiv0$.  Let $\Omega_{t_{0}}$ denote the region enclosed by $\Sigma$ and $\Sigma_{t_{0}}$ i.e.,
		\[\Omega_{t_{0}}:=\cup_{t\in [0,t_{0}]}\Sigma_{t}.\]
		Consider the following functional 
		\begin{equation}\label{eqn-function-h}
			E (\Omega) = \int_{\partial^{\ast} \Omega\setminus\Sigma}\mathrm{d}\mathcal{H}^{n-1} - \int_{\Omega_{t_0}} \epsilon_1(\chi_{\Omega}- \chi_{\bar{\Omega}})\mathrm{d} \mathcal{H}^{n},\,0<\epsilon_1<\epsilon_0,
		\end{equation}
		for $\Omega,\bar{\Omega}\in \mathcal{C}$ and $\Sigma\subset \bar{\Omega}$, where 
		\[\mathcal{C}=\{\Omega|\mbox{ all Caccioppoli sets } \Omega \subset \Omega_{t_{0}} \mbox{ and }\Omega\triangle \bar{\Omega}\Subset \overset{\circ}{\Omega}_{t_{0}}\}. \]
 Due to $0<\epsilon_1<\epsilon_0$, we can find a Borel set $\hat{\Omega}$ minimizing the brane functional $E$ such that $\hat{\Sigma}=\partial \hat{\Omega}\setminus\Sigma$ is a smooth 2-sided hypersurface disjoint from $\Sigma$ and $\Sigma_{t_{0}}$. Let $\hat{\nu}$ and $\hat{A}$ denote the outwards pointing unit normal vector and the second fundamental form of $\hat{\Sigma}$, respectively. Since $\hat{\Sigma}$ is $E$-minimizing, it has constant mean curvature $\epsilon_1$ and it is $E$-stable. So the Jacobi operator  
 $$\hat{J}\coloneqq-\Delta_{\hat g}-(\mathrm{Ric}_g(\hat\nu,\hat\nu)+|\hat A|^2)=-\Delta_{\hat g}+\frac{1}{2}(R_{\hat\Sigma}-R(g)-|\hat A|^2-\epsilon_1^2)$$is non-negative, where $\hat g$ is the induced metric of $\hat\Sigma$ from $(M,g)$. Taking the first eigenfunction $\hat\phi$ of $\hat J$ with respect to the first eigenvalue $\hat\lambda\geq0$ and defining the metric $\tilde g=\hat g+\hat\phi^2d\theta^2$ on $\hat\Sigma\times\mathbb{S}^1$, then 
 \[R(\tilde g)=R(g)+|\hat A|^2+2\hat\lambda+\epsilon_1^2\geq(n-1)(n-2)\|\wedge^2 \hat h_{1*}\|+\epsilon_1^2,\]
 where $\hat h_1\coloneqq P_1\circ h|_{\hat\Sigma}: \hat\Sigma\to\mathbb{S}^{n-1}$, which has non-zero degree since $\hat\Sigma$ is homologous to $\Sigma$. By Theorem \ref{T-extension}, we have $|\hat A|=0$ and $\epsilon_1=0$, which leads to a contradiction.\par
 Since $H_t$ vanishes for any $t\in(-\epsilon,\epsilon),$ there holds $\operatorname{Area}(\Sigma_t)=\operatorname{Area}(\Sigma)$, which yields
	that $\Sigma_t$ is also area-minimizing. Replacing $\Sigma$ by $\Sigma_t$ and combining with Theorem \ref{T-extension}, $\Sigma_{t}$ is totally geodesic and has vanished normal Ricci curvature.
	
	In the following, we establish the existence of a local isometry $\varPhi:\Sigma\times\mathbb{R}\to M$. Let $\tilde{\nu_t}$ and $\tilde{V_t}$  denote the outwards pointing unit normal vector field and normal variation vector field of the foliation $\{\Sigma_{t}\}_{-\epsilon\leq t\leq\epsilon}$, respectively. Assume $\varPhi:\Sigma\times(-\epsilon,\epsilon)\to M$ is the flow generated by $\tilde{V_t}$. It follows that $\varPhi$ is an embedding in a small neighborhood of $\Sigma$ and the pull-back of the metric $g$ is
	\[\varPhi^{\ast}(g)=\psi^{2}dt^{2}+\varPhi^{\ast}_t(g_t),\]
	where $\psi=\langle\tilde{V_t},\tilde{\nu_t}\rangle>0$ is the lapse function (see definition in (3.3) in \cite{B2MR2765731}) and $g_t$ is the induced	metric on $\Sigma_{t}$ from $g$. Since $\Sigma_{t}$ is totally geodesic, we have $\partial_t\varPhi^{\ast}_t(g_t)=2\psi  A_{\Sigma_t}=0$, which implies that $\varPhi^{\ast}_t(g_t)=g|_{\Sigma}$. Additionally, the stability condition yields $-\Delta_{\Sigma_t} \psi - |A_{\Sigma_{t}}|^2 \psi
	-\operatorname{Ric}(\nu_t,\nu_t)\psi=-\Delta_{\Sigma_t} \psi=0,$ then $\psi(\cdot,t)\equiv\mathrm{constant}$. Let
	\[r(\cdot,t)=\int_{0}^{t}\psi(\cdot,s)ds,\]
	then \[\varPhi^{\ast}(g)=dr^2+g|_{\Sigma}.\]
	Hence, $\varPhi: \Sigma\times(-\epsilon,\epsilon)\to M$ is a local isometry. Through a continuous argument as Proposition 3.8 in \cite{B2MR2765731}, we conclude that there exists a local isometry $\varPhi: \Sigma\times\mathbb{R}\to M$. Since $(\Sigma,g|_\Sigma)$ is isometric to $(\mathbb{S}^{n-1},\frac{1}{c}\cdot g_{\mathbb{S}^{n-1}})$ for a constant $c>0$, by scaling the fact $\mathbb{R}$, $(M,g)$ is isometrically covered by $(\mathbb{S}^{n-1}\times\mathbb{R}, \frac{1}{c}\cdot(g_{\mathbb{S}^{n-1}}+d\theta^2))$.
\end{proof}
\bibliographystyle{amsplain}
	\bibliography{mybib.bib}
\end{document}